\providecommand{\U}[1]{\protect\rule{.1in}{.1in}}
\newtheorem{thm}{Theorem}[section]
\newtheorem{assum}[thm]{Assumption}
\newtheorem{nota}[thm]{Notation}
\newtheorem{lemma}[thm]{Lemma}
\newtheorem{prop}[thm]{Proposition}
\newtheorem{cor}[thm]{Corollary}
\newtheorem{rmk}[thm]{Remark}
\subjclass[2000]{}
\title[Burniat surfaces with $K^{2}=4$ and of non nodal type]{A characterization of Burniat surfaces with $K^{2}=4$ and of non nodal type}
\author{\uppercase{SHIN Y\lowercase{ong}J\lowercase{oo}}}
\date{}
\address{Shanghai Center for Mathematical Sciences, 22 Floor, East Guanghua Tower, Fudan University, No. 220 Handan Rd., Shanghai 200433, P. R. China}
\email{shinyongjoo@fudan.edu.cn}
\subjclass[2010]{Primary 14J10, 14J29}
\keywords{bicanonical map, Burniat surface, surface of general type} 
\begin{document}

\begin{abstract}
Let $S$ be a minimal surface of general type with $p_{g}(S)=0$ and $K^{2}_{S}=4$.\ Assume the bicanonical map $\varphi$ of $S$ is a morphism of degree $4$ such that the image of $\varphi$ is smooth.\ Then we prove that the surface $S$ is a Burniat surface with $K^{2}=4$ and of non nodal type.
\end{abstract}

\maketitle

\section{\bigskip Introduction}
When we consider the bicanonical map $\varphi$ of a minimal surface $S$ of general type with $p_{g}(S)=0$ over the field of complex numbers, Xiao \cite{FABSTG} gave that the image of $\varphi$ is a surface if $K^{2}_{S}\ge 2$, and Bombieri \cite{CMSGT} and Reider  \cite{VBR2L} proved that $\varphi$ is a morphism for $K^{2}_{S}\ge 5$. In \cite{SG06, BS07, DBS0} Mendes Lopes \cite{DGCSG0} and Pardini obtained that the degree of $\varphi$ is $1$ for $K^{2}_{S}=9$; $1$ or $2$ for $K^{2}_{S}=7,8$; $1,2$ or $4$ for $K^{2}_{S}=5,6$ or for $K^{2}_{S}=3,4$ with a morphism $\varphi$. Moreover, there are further studies for the surface $S$ with non birational map $\varphi$ in \cite{CCMSS0, NS03, ESEN, SG06, BS07II, CDPG80}. 
 
 Mendes Lopes and Pardini \cite{CCMSS0} gave a characterization of a Burniat surface with $K^{2}=6$ as a minimal surface of general type with $p_{g}=0,\ K^{2}=6$ and the bicanonical map of degree $4$. Zhang \cite{CCS05BM} proved that  a surface $S$ is a Burniat surface with $K^{2}=5$  if the image of the bicanonical map $\varphi$ of $S$ is smooth, where a surface $S$ is a minimal surface of general type with $p_{g}(S)=0,\ K^{2}_{S}=5$ and the bicanonical map $\varphi$ of degree $4$. In this paper we extend their characterizations of Burniat surfaces with $K^{2}=6$ \cite{CCMSS0}, and with $K^{2}=5$ \cite{CCS05BM} to one for the case $K^{2}=4$ as the following. 
 
  \begin{thm}\label{mainthm}
Let $S$ be a minimal surface of general type with $p_{g}(S)=0$ and $K_{S}^{2}=4$. Assume the bicanonical map $\varphi\colon S\longrightarrow \Sigma\subset \mathbb{P}^{4}$ is a morphism of degree $4$ such that the image $\Sigma$ of $\varphi$ is smooth. Then the surface $S$ is a Burniat surface with $K^{2}=4$ and of non nodal type.
\end{thm}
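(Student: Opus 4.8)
The plan is to recover, directly from the geometry of the degree-$4$ morphism $\varphi$, the $(\ZZ/2)^{2}$-Galois structure that underlies every Burniat surface. First I would pin down the target. Since $S$ is minimal of general type with $p_{g}(S)=0$, minimality forces $\chi(\Oh_{S})\ge 1$ and hence $q(S)=0$, so Riemann--Roch gives $P_{2}(S)=\chi(\Oh_{S})+K_{S}^{2}=1+4=5$; thus $\Sigma$ is a non-degenerate surface in $\PP^{4}$, exactly as stated. Because $\varphi$ is a morphism defined by the complete system $|2K_{S}|$, we have $\varphi^{*}\Oh_{\PP^{4}}(1)=\Oh_{S}(2K_{S})$, so from $(2K_{S})^{2}=4K_{S}^{2}=16=\deg\varphi\cdot\deg\Sigma$ we get $\deg\Sigma=4$. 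A smooth non-degenerate surface of degree $4$ in $\PP^{4}$ is, by the classification of surfaces of almost minimal degree, the anticanonically embedded Del Pezzo surface of degree $4$, i.e.\ the smooth complete intersection of two quadrics, which is $\PP^{2}$ blown up at five points in general position. Fixing this identification of $\Sigma$ is the foundation for everything that follows, and already signals the link to the Burniat construction: a degree-$4$ Burniat surface is a bi-double cover of a weak Del Pezzo surface of degree $4$, whose anticanonical model is smooth precisely in the non-nodal case.

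Next I would produce the covering involutions. The heart of the argument is to show that the quartic morphism $\varphi$ is a $(\ZZ/2)^{2}$-Galois cover of $\Sigma$; equivalently, that the group $G$ of automorphisms of the canonical model of $S$ compatible with $\varphi$ is isomorphic to $(\ZZ/2)^{2}$. To this end I would first exhibit bicanonical involutions, automorphisms $\sigma$ with $\varphi\circ\sigma=\varphi$, by the standard paracanonical techniques available for $p_{g}=0$ surfaces, and then eliminate the two competing possibilities, namely that $\varphi$ be a cyclic $\ZZ/4$-cover or a genuinely non-Galois quartic cover with non-abelian Galois closure. Here the smoothness of $\Sigma$ and its very explicit structure as a Del Pezzo quartic are decisive: they constrain the way the branch locus of $\varphi$ can sit on $\Sigma$, and one checks that only the bi-double structure is compatible with these constraints.

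Once $G\cong(\ZZ/2)^{2}$ is fixed, with its three non-trivial involutions $\sigma_{1},\sigma_{2},\sigma_{3}$, I would analyse each $\sigma_{i}$ by means of the holomorphic and topological Lefschetz fixed-point formulas. Since $p_{g}=q=0$, the holomorphic formula has trivial right-hand side and therefore imposes rigid numerical relations on the number of isolated fixed points of each $\sigma_{i}$ and on its fixed curves; combining these with the topological formula and with $K_{S}^{2}=4$ determines the invariants of the three intermediate quotients $S/\sigma_{i}$ and of the common quotient $W=S/G$. I expect this to identify $W$ (up to contraction of $(-2)$-curves) with $\Sigma$ and to realise $\varphi$ as the composition $S\to W\to\Sigma$, and then to recover the branch divisors $D_{1},D_{2},D_{3}$ and the line bundles $L_{1},L_{2},L_{3}$ of the $(\ZZ/2)^{2}$-cover. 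Matching the building data $(W;\{D_{i}\},\{L_{i}\})$ against the Burniat configuration of lines on a blow-up of $\PP^{2}$, and observing that the smoothness of $\Sigma$ precisely excludes the nodal degeneration (i.e.\ prevents the appearance of the relevant $(-2)$-curves), then yields that $S$ is a Burniat surface with $K^{2}=4$ of non nodal type.

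The main obstacle is the middle step: establishing that the degree-$4$ map is genuinely $(\ZZ/2)^{2}$-Galois rather than $\ZZ/4$ or non-Galois, and producing all three commuting involutions simultaneously. This is exactly the place where the hypothesis that $\Sigma$ be smooth — hence the Del Pezzo quartic, with controlled behaviour of the fibres of $\varphi$ — must be exploited in full; without it the cover could degenerate and the reconstruction of the Burniat building data in the final step would break down.
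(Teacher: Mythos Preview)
Your identification of $\Sigma$ as the anticanonically embedded del Pezzo surface of degree $4$ is correct and agrees with the paper, and the overall target --- proving that $\varphi$ is a $(\ZZ/2)^{2}$-Galois cover and matching its building data to the Burniat configuration --- is also the paper's endpoint. However, the route you propose to the Galois structure is essentially a placeholder. ``Standard paracanonical techniques'' and ``eliminate the $\ZZ/4$ and non-Galois possibilities'' do not name any concrete mechanism, and the Lefschetz fixed-point analysis you sketch presupposes that the involutions already exist. You yourself flag this middle step as the main obstacle, and your proposal does not resolve it; this is a genuine gap, not a routine omission.

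The paper fills this gap by a very different and highly specific chain of constructions. It first analyses the sixteen $(-1)$-curves on $\Sigma$: each pullback is either a smooth $(-4)$-curve or twice an irreducible curve. Miyaoka's bound on disjoint $(-4)$-curves together with the double-cover inequality $K_{Y}^{2}\ge 16(q(Y)-1)$ forces exactly two of them, say $e_{4},e_{5}$, to have reduced $(-4)$-curve pullbacks $E_{4},E_{5}$, while the rest pull back doubly. This produces explicit $2$-torsion classes $\eta_{i}$ and three genus-$3$ pencils $|F_{i}|$ on $S$ (pullbacks of conic pencils $|l-e_{i}|$). One then builds three smooth double covers $\pi_{i}\colon Y_{i}\to S$ branched over $E_{4}+E_{5}$; each has $q(Y_{i})=1$, and by the Albanese-pencil proposition the Albanese map of $Y_{i}$ sits over some $u_{s_{i}}\colon S\to\PP^{1}$ among the $|F_{j}|$. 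A combinatorial argument shows $(s_{1}\,s_{2}\,s_{3})$ is a $3$-cycle, and exploiting this one proves the general fibre $F_{i}$ is \emph{hyperelliptic}. The three hyperelliptic involutions $\gamma_{i}$ extend to $S$ by minimality, commute with $\varphi$ since $F_{j}|_{F_{i}}\equiv K_{F_{i}}$, and are shown to be pairwise distinct by comparing genera of fibre-quotients inside the $Y_{i}$. This yields $G\cong(\ZZ/2)^{2}$. The branch decomposition $B=B_{1}+B_{2}+B_{3}$ is then read off directly from which $(-1)$-curves had non-reduced preimage, without any Lefschetz computation.

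So the missing idea is concrete: the involutions are not obtained by abstract structural or fixed-point considerations, but as hyperelliptic involutions on genus-$3$ fibres, and the hyperellipticity itself is established through auxiliary irregular double covers and their Albanese pencils. None of this machinery is anticipated in your outline.
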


 As we mentioned before Bombieri \cite{CMSGT} and Reider \cite{VBR2L} gave that the bicanonical map of a minimal surface of general type with $p_{g}=0$ is a morphism for $K^{2}\ge 5$. On the other hand, Mendes Lopes and Pardini \cite{NCF9} found that there is a family of numerical Campedelli surfaces, minimal surfaces of general type with $p_{g}=0$ and $K^{2}=2$, with $\pi_{1}^{alg}=\mathbb{Z}_{3}^{2}$ such that the base locus of the bicanonical system consists of two points. However, we do not know whether the bicanonical system of a minimal surface of general type with $p_{g}=0$ and $K^{2}=3$ or $4$ has a base point or not. Thus we need to assume that the bicanonical map is a morphism in Theorem \ref{mainthm}.

Bauer and Catanese \cite{BSSBII, BSSBIII, BSSBEII} studied Burniat surfaces with $K^{2}=4$. Let $S$ be a Burniat surface with $K^{2}=4$. When $S$ is of non nodal type it has the ample canonical divisor, but when $S$ is of nodal type it has one ($-2$)-curve. For the case of nodal type we will discuss to characterize Burniat surfaces with $K^{2}=4$ and of nodal type in the future article.

We follow and use the strategies of Mendes Lopes and Pardini \cite{CCMSS0}, and of Zhang \cite{CCS05BM} as main tools of this article. The paper is organized as follows: in Section \ref{DCBC} we recall some useful formulas and Propositions for a double cover from \cite{CCMSS0}, and we give a description of a Burniat surface with $K^{2}=4$ and of non nodal type; in Section \ref{analyze} we analyze branch divisors of the bicanonical morphism $\varphi$ of degree $4$ of a minimal surface of general type with $p_{g}=0$ and $K^{2}=4$ when the image of $\varphi$ is smooth; in Section \ref{proofmainthm} we give a proof of Theorem \ref{mainthm}.

\section{Notation and conventions}\label{NC}
In this section we fix the notation which will be used in the paper. We work over the field of complex numbers.

Let $X$ be a smooth projective surface. Let $\Gamma$ be a curve in
$X$ and $\tilde{\Gamma}$ be the normalization of $\Gamma$. We set:\\\\
$K_X$: the canonical divisor of $X$;\\
$q(X)$: the irregularity of $X$, that is, $h^{1}(X,\mathcal{O}_{X})$;\\
$p_{g}(X)$: the geometric genus of $X$, that is, $h^{0}(X,\mathcal{O}_{X}(K_{X}))$;\\
$p_{g}(\Gamma)$: the geometric genus of $\Gamma$, that is,
$h^{0}(\tilde{\Gamma},\mathcal{O}_{\tilde{\Gamma}}(K_{\tilde{\Gamma}}))$;\\
$\chi_{top}(X)$: the topological Euler characteristic of $X$;\\
$\chi(\mathcal{F})$: the Euler characteristic of a sheaf $\mathcal{F}$ on $X$, that is, $\sum_{i=0}^{2}(-1)^{i}h^{i}(X,\mathcal{F})$;\\
$\equiv$: the linear equivalence of divisors on a surface;\\ 
$(-n)$-curve: a smooth irreducible rational curve with the self-intersection number $-n$,
in particular we call that a $(-1)$-curve is exceptional and a $(-2)$-curve is nodal;\\
We usually omit the sign $\cdot$ of the intersection product of two
divisors on a surface.  And we do not distinguish between line bundles and divisors on a smooth variety.

\section{Preliminaries}\label{DCBC}

\subsection{Double covers}\label{DC}
Let $S$ be a smooth surface and $B\subset S$ be a smooth curve (possibly empty) such that $2L\equiv B$ for a line bundle $L$ on $S$. Then there exists a double cover $\pi\colon Y\longrightarrow S$ branched over $B$. We get \[\pi_{*}\mathcal{O}_{Y}=\mathcal{O}_{S}\oplus L^{-1},\] and the invariants of $Y$ from ones of $S$ as follows:
\[K^{2}_{Y}=2(K_{S}+L)^{2},\ \chi(\mathcal{O}_{Y})=2\chi(\mathcal{O}_{S})+\frac{1}{2}L(K_{S}+L),\]
\[p_{g}(Y)=p_{g}(S)+h^{0}(S,\mathcal{O}_{S}(K_{S}+L)),\]
\[q(Y)=q(S)+h^{1}(S,\mathcal{O}_{S}(K_{S}+L)).\]
  We begin with the following Proposition in \cite{CCMSS0}. 
\begin{prop}[Proposition 2.1 in \cite{CCMSS0}]\label{albenese}
Let $S$ be a smooth surface with $p_{g}(S)=q(S)=0$, and let $\pi\colon Y\longrightarrow S$ be a smooth double cover. Suppose that $q(Y)>0$. Denote the Albanese map of $Y$ by $\alpha\colon Y\longrightarrow$ A. Then

$(i)$ the Albanese image of $Y$ is a curve $C$$;$

$(ii)$ there exist a fibration $g\colon S \longrightarrow \mathbb{P}^{1}$ and a degree $2$ map $p\colon C\longrightarrow \mathbb{P}^{1}$ such that $p\circ\alpha=g\circ\pi$.
\end{prop}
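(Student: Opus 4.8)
The plan is to exploit the involution $\sigma$ on $Y$ with quotient $Y/\sigma=S$ and to analyze how it acts on holomorphic forms. From the splitting $\pi_{*}\Oh_{Y}=\Oh_{S}\oplus L^{-1}$ and the companion formula $\pi_{*}\omega_{Y}=\omega_{S}\oplus(\omega_{S}\otimes L)$, the cohomology of $Y$ decomposes into $\sigma$-invariant and $\sigma$-anti-invariant parts, these being the contributions from $\Oh_{S}$ (resp. $\omega_{S}$) and from $L^{-1}$ (resp. $\omega_{S}\otimes L$). First I would record that the invariant part of $H^{1}(Y,\Oh_{Y})$ is $H^{1}(S,\Oh_{S})=0$ and the invariant part of $H^{0}(Y,\Omega^{1}_{Y})$ is $H^{0}(S,\Omega^{1}_{S})$, of dimension $q(S)=0$; hence every holomorphic $1$-form on $Y$ is anti-invariant, i.e. $\sigma^{*}\omega=-\omega$. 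Likewise the invariant part of $H^{0}(Y,\Omega^{2}_{Y})$ is $H^{0}(S,\omega_{S})$, of dimension $p_{g}(S)=0$, so $Y$ carries no nonzero invariant holomorphic $2$-form.

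For part $(i)$ the key observation is that for any two $1$-forms $\omega_{1},\omega_{2}\in H^{0}(Y,\Omega^{1}_{Y})$ the wedge $\omega_{1}\wedge\omega_{2}$ is $\sigma$-invariant (the two sign flips cancel), hence is an invariant holomorphic $2$-form and therefore vanishes. Thus all global $1$-forms wedge to zero pairwise. I would then argue that the Albanese image $\alpha(Y)$ cannot have dimension $\geq 2$: at a general point the differential of $\alpha$ would have rank $\geq 2$, giving two global $1$-forms independent in the cotangent space and so a nonzero wedge, a contradiction. Since $q(Y)>0$ forces $\alpha$ to be nonconstant, $\alpha(Y)$ is a curve $C$. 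This is exactly the Castelnuovo--de Franchis mechanism.

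For part $(ii)$ I would first normalize $\alpha$ so that the involution induced on $A$ is multiplication by $-1$: functoriality gives an affine involution of $A$ whose linear part acts on $H^{0}(A,\Omega^{1})\cong H^{0}(Y,\Omega^{1}_{Y})$ as $\sigma^{*}=-1$, and since $A$ is divisible one can absorb the translation by moving the base point, so that $\alpha\circ\sigma=-\alpha$. Replacing $C$ by its smooth model and taking the Stein factorization of $\alpha\colon Y\to C$ (so that $C$ is smooth of genus $q(Y)$ and $\alpha$ has connected fibers), the relation $\alpha\circ\sigma=-\alpha$ shows $\sigma$ descends to an involution $\sigma_{C}$ of $C$ with $\alpha\circ\sigma=\sigma_{C}\circ\alpha$. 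This $\sigma_{C}$ is nontrivial: otherwise $\alpha$ would factor through $\pi$ and every $1$-form on $Y$ would be a pullback from $S$, hence $\sigma$-invariant, contradicting the anti-invariance established above. Setting $p\colon C\to B:=C/\sigma_{C}$, equivariance lets $\alpha$ descend to $g\colon S\to B$ with $p\circ\alpha=g\circ\pi$. Finally, since $\sigma_{C}^{*}$ acts as $\sigma^{*}=-1$ on $H^{0}(C,\Omega^{1}_{C})\cong H^{0}(Y,\Omega^{1}_{Y})$, the invariant part $H^{0}(B,\Omega^{1}_{B})$ vanishes, so $B\cong\mathbb{P}^{1}$.

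The hard part will be the bookkeeping in part $(ii)$: pinning down that the induced automorphism of $A$ is exactly $-1$ (not merely $-1$ up to translation) and checking that the descended map $g$ is a genuine fibration with connected fibers. For the latter I would trace a fiber $g^{-1}(b)$ back through $\pi$ to $\alpha^{-1}(p^{-1}(b))$: over an unramified $b$ the two connected $\alpha$-fibers are swapped by $\sigma$ and so map to a single connected curve in $S$, while over a branch point the single connected $\alpha$-fiber again maps to a connected fiber, giving connectedness of the fibers of $g$. The wedge-product / Castelnuovo--de Franchis step in part $(i)$, by contrast, I expect to be short once the anti-invariance of $1$-forms is in place.
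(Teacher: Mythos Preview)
The paper does not supply its own proof of this proposition: it is quoted verbatim as Proposition~2.1 of \cite{CCMSS0} and used as a black box, so there is nothing in the present paper to compare against. Your argument is the standard one and matches the proof in the cited source: anti-invariance of all holomorphic $1$-forms (from $q(S)=0$), vanishing of their wedges as invariant $2$-forms (from $p_{g}(S)=0$), Castelnuovo--de Franchis to force the Albanese image to be a curve, and then descent of the involution to $C$ with $C/\sigma_{C}\cong\PP^{1}$. The bookkeeping you flag---normalizing the induced map on $A$ to $-1$, passing to the Stein factorization, and checking nontriviality of $\sigma_{C}$---is handled correctly. One minor point: your final paragraph on connectedness of the fibres of $g$ is not strictly needed for the statement as written (the proposition only asserts the existence of a fibration $g$ and the commutation $p\circ\alpha=g\circ\pi$), though it does no harm.
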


\begin{prop}[Corollary 2.2 in \cite{CCMSS0}]\label{inequiKq}
Let $S$ be a smooth surface of general type with $p_{g}(S)=q(S)=0,\ K_{S}^{2}\ge3$, and let $\pi\colon Y\longrightarrow S$ be a smooth double cover. Then $K_{Y}^{2}\ge16(q(Y)-1)$.
\end{prop}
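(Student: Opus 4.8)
The plan is to split on $q(Y)$. For $q(Y)\le 1$ the right-hand side $16(q(Y)-1)$ is non-positive; since the relative-canonical estimate below yields $K_Y^2\ge 0$ as soon as $q(Y)\ge 1$, and $K_Y^2\ge -16$ needs only a crude bound when $q(Y)=0$, these are the easy cases. I therefore concentrate on $q(Y)\ge 2$ and set $b:=q(Y)$. By Proposition \ref{albenese} the Albanese image of $Y$ is a smooth curve $C$ of genus $b$, and there exist a fibration $g\colon S\to\mathbb{P}^1$ and a degree $2$ morphism $p\colon C\to\mathbb{P}^1$ with $p\circ\alpha=g\circ\pi$.

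Next I would identify the fibres. For general $t\in\mathbb{P}^1$ the preimage $p^{-1}(t)=\{c_1,c_2\}$ consists of two distinct points, so $\pi^{-1}(F_t)=\alpha^{-1}(c_1)\sqcup\alpha^{-1}(c_2)$ is a disjoint union of two fibres of $\alpha$, each mapped isomorphically onto the general fibre $F:=F_t$ of $g$ by the degree $2$ morphism $\pi$. Hence $\pi$ is \'etale over $F$, the branch divisor $B$ is vertical for $g$ (so $L\cdot F=0$), and a general fibre $\widetilde{F}$ of the Albanese pencil satisfies $\widetilde{F}\cong F$. In particular $p_g(\widetilde{F})=p_g(F)=:h$, and because $Y$ is of general type with Albanese image the curve $C$, one has $h\ge 2$.

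With both fibrations at hand I would run the relative-canonical estimate for $\alpha\colon Y\to C$. From $K_Y\cdot\widetilde{F}=2h-2$, $(\alpha^*K_C)^2=0$ and $K_Y\cdot\alpha^*K_C=(2b-2)(2h-2)$ one gets $K_{Y/C}^2=K_Y^2-8(b-1)(h-1)$ for $K_{Y/C}=K_Y-\alpha^*K_C$. After checking that the Albanese pencil is relatively minimal — which I would deduce from the minimality of $S$ and the form $K_Y\equiv\pi^*(K_S+L)$, the only exceptional curves of $Y$ arising from $(-2)$-curves in $B$ — the class $K_{Y/C}$ is nef, so $K_{Y/C}^2\ge 0$ and $K_Y^2\ge 8(b-1)(h-1)$. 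For $h\ge 3$ this already gives $K_Y^2\ge 16(b-1)=16(q(Y)-1)$.

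The genuinely delicate case, which I expect to be the main obstacle, is $h=2$: the crude bound then yields only $K_Y^2\ge 8(b-1)$ and the missing factor $2$ must come from $K_S^2\ge 3$. Here I would exploit the geometry of $g$. By Hurwitz $p$ is branched over $2b+2$ points, and over each such point $\pi^{-1}(F_t)$ is a single fibre of $\alpha$ mapping with degree $2$ onto $F_t$; by the Hurwitz formula this is impossible if $F_t$ were a smooth fibre of genus $h\ge 2$, so each of the $2b+2$ branch values of $p$ carries a singular fibre of $g$. Feeding this count into $\chi_{top}(S)=12-K_S^2$ and combining it with the sharp slope inequality $K_{Y/C}^2\ge\frac{4(h-1)}{h}\deg\alpha_*\omega_{Y/C}=2\deg\alpha_*\omega_{Y/C}$ should promote the estimate to $K_Y^2\ge 16(b-1)$. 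Bounding $\deg\alpha_*\omega_{Y/C}$ from below — equivalently controlling $\chi(\mathcal{O}_Y)=2+\tfrac12 L(K_S+L)$ and the vertical divisor $L$ — is the crux, and it is precisely where the hypothesis $K_S^2\ge 3$ has to enter.
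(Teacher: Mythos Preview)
The paper does not supply a proof of this proposition; it merely quotes it as Corollary~2.2 of \cite{CCMSS0}. So there is no in-paper argument to compare against, and the relevant question is whether your outline reconstructs a valid proof.

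Your reduction via Proposition~\ref{albenese} to the Albanese fibration $\alpha\colon Y\to C$ with fibre genus $h$ equal to that of $g\colon S\to\PP^1$, together with the Arakelov/Xiao bound $K_{Y/C}^2\ge 0$, is the right skeleton and disposes of $h\ge 3$ as you say (modulo checking relative minimality, which you pass over quickly; a genuine $(-1)$-curve in a fibre of $\alpha$ would, via $K_Y=\pi^*(K_S+L)$, force a curve $E'\subset S$ with $(K_S+L)\cdot E'<0$, and one must argue this is impossible). The easy cases $q(Y)\le 1$ are also not quite free: for $q(Y)=0$ you assert $K_Y^2\ge -16$ ``needs only a crude bound'' but give none, and $K_Y^2=2(K_S+L)^2$ has no evident lower bound without further input.

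The substantive gap is the case $h=2$, which you yourself flag as ``the crux'' and do not complete. Your proposed mechanism --- counting the $2b+2$ branch points of $p$ and arguing that each forces a singular fibre of $g$ --- does not obviously go through: over a branch point $t$ one has $\pi^{-1}(F_t)=\alpha^{-1}(c)$ connected, but this can happen with $F_t$ smooth (the restriction $L|_{F_t}$ could simply be nontrivial $2$-torsion), and nothing forces $\alpha^{-1}(c)$ to be a smooth fibre of $\alpha$, so the Hurwitz contradiction you invoke need not apply. Moreover, even granting a count of singular fibres, you have not explained how to convert it, together with $K_S^2\ge 3$, into the missing factor of $2$. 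In the original argument this case is handled by a direct analysis of the vertical branch divisor $B=2L$ and the resulting formula for $K_Y^2=2(K_S+L)^2$ in terms of the fibre decomposition of $B$, using $K_S^2\ge 3$ in an essential way; your sketch gestures at this but does not carry it out. As written, the proposal is a plausible plan rather than a proof.
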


\subsection{Bidouble covers} \label{bico} Let $Y$ be a smooth surface and $D_{i}\subset Y,\ i=1,2,3$ be smooth divisors such that $D:=D_{1}+D_{2}+D_{3}$ is a normal crossing divisor, $2L_{1}\equiv D_{2}+D_{3}$ and $2L_{2}\equiv D_{1}+D_{3}$ for line bundles $L_{1},\ L_{2}$ on $Y$. By \cite{ACAV} there exists a bidouble cover $\psi\colon \bar{Y}\longrightarrow Y$ branched over $D$. We obtain \[\psi_{*}\mathcal{O}_{\bar{Y}}=\mathcal{O}_{Y}\oplus L_{1}^{-1}\oplus L_{2}^{-1}\oplus L_{3}^{-1},\] where $L_{3}=L_{1}+L_{2}-D_{3}$.
 
 We describe a Burniat surface with $K^{2}=4$ and of non nodal type \cite{BSSBII}. 
 \begin{nota} \label{notdel}
{\rm{ Let $\rho\colon \Sigma\longrightarrow \mathbb{P}^{2}$ be the blow-up of $\mathbb{P}^{2}$ at $5$ points $p_{1},\ p_{2},\ p_{3},\ p_{4},\ p_{5}$ in general position. We denote that $l$ is the pull-back of a line in $\mathbb{P}^{2}$, $e_{i}$ is  the exceptional curve over $p_{i},\ i=1,2,3,4,5$, and $e'_{i}$ is the strict transform of the line joining $p_{j}$ and $p_{k},\ \{i,j,k\}=\{1,2,3\}$. Also, $g_{i}$ $(resp.\ h_{i})$ denotes the strict transform of the line joining $p_{4}$ $(resp.\ p_{5})$ and $p_{i},\ i=1,2,3$. Then the picard group of $\Sigma$ is generated by $l,\ e_{1},\ e_{2},\ e_{3},\ e_{4}$ and $e_{5}$. We get that $-K_{\Sigma}\equiv3l-\sum_{i=1}^{5}e_{i}$ is very ample. The surface $\Sigma$ is embedded by the linear system $|-K_{\Sigma}|$ as a smooth surface of degree $4$ in $\mathbb{P}^{4}$, called a del Pezzo surface of degree $4$.
}}
 \end{nota}
 
 We consider smooth divisors
  \[B_{1}=e_{1}+e'_{1}+g_{2}+h_{2}\equiv3l+e_{1}-3e_{2}-e_{3}-e_{4}-e_{5}, \textrm{ }\textrm{ }\textrm{ }\textrm{ }\textrm{ }\]
  \[B_{2}=e_{2}+e'_{2}+g_{3}+h_{3}\equiv3l-e_{1}+e_{2}-3e_{3}-e_{4}-e_{5}, \textrm{and}\]
  \[B_{3}=e_{3}+e'_{3}+g_{1}+h_{1}\equiv3l-3e_{1}-e_{2}+e_{3}-e_{4}-e_{5}.\textrm{ }\textrm{ }\textrm{ }\textrm{ }\textrm{ }\]
 Then $B:=B_{1}+B_{2}+B_{3}$ is a normal crossing divisor, $2L'_{1}\equiv B_{2}+B_{3}$ and $2L'_{2}\equiv B_{1}+B_{3}$ for line bundles $L'_{1},\ L'_{2}$ on $\Sigma$. We obtain a bidouble cover $\varphi\colon S\longrightarrow \Sigma\subset \mathbb{P}^{4}$. We remark that the example is a minimal surface $S$ of general type with $p_{g}(S)=0,\ K_{S}^{2}=4$ and the bicanonical morphism $\varphi$ of degree $4$ having the ample $K_{S}$.

\section{Branch divisors of the bicanonical map}\label{analyze}
\begin{nota}\label{BDBM1}
{\rm{
Let $S$ be a minimal surface of general type with $p_{g}(S)=0$ and $K_{S}^{2}=4$. Assume that the bicanonical map $\varphi$ of $S$ is a morphism of degree $4$ and the image $\Sigma$ of $\varphi$ is smooth in $\mathbb{P}^{4}$. By \cite{ORS} $\Sigma$ is a del Pezzo surface of degree $4$ in Notation \ref{notdel}. We denote $\rho,l,e_{i},e'_{j},g_{j},h_{j},\ i=1,2,3,4,5,\ j=1,2,3$ as the notations in Notation \ref{notdel}. Denote $\gamma\equiv l-e_{4}-e_{5},\ \delta\equiv 2l-\sum_{i=1}^{5}e_{i},\ f_{i}\equiv l-e_{i}$ and $F_{i}\equiv\varphi^{*}(f_{i})$ for $i=1,2,3,4,5$.
}}
\end{nota}
We follow the strategies of \cite{CCMSS0, CCS05BM}. We start with the following proposition similar to one in Section $4$ of \cite{CCS05BM}.

\begin{prop}[Note Proposition 4.2 in \cite{CCS05BM}]\label{cong3}
For $i=1,2,3,4,5$ if $f_{i}\in |f_{i}|$ is general, then $\varphi^{*}(f_{i})$ is connected, hence $|F_{i}|$ induces a genus $3$ fibration $u_{i}\colon S\longrightarrow \mathbb{P}^{1}$.
\end{prop}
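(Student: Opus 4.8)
The plan is to read off the numerical type of $F_{i}$ from the bicanonical identity $\varphi^{*}(-K_{\Sigma})\equiv 2K_{S}$, recalling that $-K_{\Sigma}$ is the hyperplane class of $\Sigma\subset\mathbb{P}^{4}$. Since $f_{i}^{2}=0$ and $(-K_{\Sigma})\cdot f_{i}=2$ on the del Pezzo surface $\Sigma$, the projection formula for the degree-$4$ morphism $\varphi$ gives
\[
F_{i}^{2}=\deg\varphi\cdot f_{i}^{2}=0,\qquad 2K_{S}\cdot F_{i}=\deg\varphi\cdot(-K_{\Sigma})\cdot f_{i}=8,
\]
so $K_{S}\cdot F_{i}=4$ and adjunction yields $p_{a}(F_{i})=3$. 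As $|f_{i}|$ is the base-point-free pencil defining a conic-bundle ruling $\pi_{i}\colon\Sigma\to\mathbb{P}^{1}$, its pullback $\varphi^{*}|f_{i}|\subseteq|F_{i}|$ is a base-point-free pencil, and I would Stein-factor the composite $g:=\pi_{i}\circ\varphi\colon S\to\mathbb{P}^{1}$ as $S\overset{u_{i}}{\longrightarrow}B\overset{h}{\longrightarrow}\mathbb{P}^{1}$ with $u_{i}$ having connected fibres and $h$ finite. Since $q(S)=0$ the base satisfies $g(B)=0$, hence $B\cong\mathbb{P}^{1}$, and the statement reduces to proving $\deg h=1$, i.e. that $F_{i}=\varphi^{*}(f_{i})$ is connected.

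To control the number $d:=\deg h$ of connected components of a general $F_{i}$, I would use a parity argument. Writing the general fibre as $F_{i}=\sum_{j=1}^{d}C_{j}$ with each $C_{j}$ a general, hence smooth, fibre of $u_{i}$ (so $C_{j}^{2}=0$) mapping onto $f_{i}$ with degree $m_{j}=\deg(\varphi|_{C_{j}})$, the identity $2K_{S}\cdot C_{j}=(-K_{\Sigma})\cdot\varphi_{*}C_{j}=2m_{j}$ gives $K_{S}\cdot C_{j}=m_{j}$; adjunction on $C_{j}$ then forces $m_{j}=2p_{a}(C_{j})-2$ to be even, so $m_{j}\ge 2$. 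As $\sum_{j}m_{j}=K_{S}\cdot F_{i}=4$, there are at most two components, and the disconnected case occurs only when $d=2$ and $m_{1}=m_{2}=2$, that is, when $u_{i}\colon S\to\mathbb{P}^{1}$ is a genus-$2$ fibration.

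The decisive step is to rule out this genus-$2$ case, and this is where I expect the main difficulty. Assuming $d=2$, the relation $\pi_{i}\circ\varphi=h\circ u_{i}$ factors $\varphi$ through the fibre product $W:=\Sigma\times_{\mathbb{P}^{1}}B$, giving $S\overset{\psi}{\longrightarrow}W\overset{\theta}{\longrightarrow}\Sigma$ with $\deg\psi=\deg\theta=2$, where $W$ is a rational conic bundle over $\mathbb{P}^{1}$ and $\theta$ is the double cover of $\Sigma$ branched over the two fibres of $\pi_{i}$ lying over the branch points of $h$; its branch class is thus $\equiv 2f_{i}$, so $K_{W}=\theta^{*}(K_{\Sigma}+f_{i})$. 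Comparing $2K_{S}=\varphi^{*}(-K_{\Sigma})=\psi^{*}\theta^{*}(-K_{\Sigma})$ with $2K_{S}=\psi^{*}(2K_{W}+B_{\psi})$ and using that $\psi^{*}$ is injective on $\Pic(W)$ (as $W$ is rational, hence $\Pic(W)$ is torsion-free), I would identify the branch divisor of $\psi$ as $B_{\psi}\equiv\theta^{*}(-3K_{\Sigma}-2f_{i})$, whence $B_{\psi}^{2}=24$ and $B_{\psi}\cdot K_{W}=-4$. Substituting into the double cover formula of Section \ref{DC}, with $\mathcal{L}_{\psi}=\tfrac{1}{2}B_{\psi}$,
\[
\chi(\mathcal{O}_{S})=2\chi(\mathcal{O}_{W})+\tfrac{1}{2}\mathcal{L}_{\psi}(K_{W}+\mathcal{L}_{\psi})=2+\tfrac{1}{2}\bigl(\tfrac{1}{4}B_{\psi}^{2}+\tfrac{1}{2}B_{\psi}\cdot K_{W}\bigr)=4,
\]
contradicting $\chi(\mathcal{O}_{S})=1$. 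Hence $F_{i}$ is connected, and by Bertini its general member is a smooth connected curve of genus $p_{a}(F_{i})=3$, so $|F_{i}|$ induces the desired genus-$3$ fibration $u_{i}$.

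The one technical point to handle with care is the smoothness of $W$: the two branch fibres of $\pi_{i}$ should be taken among the smooth fibres of the ruling rather than its four singular fibres, so that $\theta$ (and hence $W$) is smooth. In the exceptional case one works on the minimal resolution, where $W$ has only rational singularities and $\chi(\mathcal{O}_{W})=1$ is preserved; since the computation produces $\chi=4$, far from the correct value $\chi(\mathcal{O}_{S})=1$, the contradiction is robust under such corrections.
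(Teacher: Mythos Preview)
The paper's proof is a bare citation of Zhang's Proposition~4.2, so there is no detailed argument to compare against; your route via the fibre product $W=\Sigma\times_{\mathbb{P}^{1}}B$ and the double-cover $\chi$-formula is an independent and reasonable one. The numerical set-up and the parity reduction to $d\in\{1,2\}$ are correct, and when $W$ is smooth your computation $\chi(\mathcal{O}_{S})=4$ is valid and yields the desired contradiction.

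Two points need to be closed, however. First, you implicitly use that $\psi$ (hence $\varphi$) is finite, which is required for flatness and for the covering formulas; at this stage of the paper that has not yet been shown. It follows from the $b_{2}$-argument that opens the proof of Proposition~\ref{finampirr} (namely $b_{2}(S)=b_{2}(\Sigma)=6$ forces $\varphi^{*}$ to be an isomorphism on $H^{2}(\,\cdot\,,\mathbb{Q})$), so there is no circularity, but you should invoke it explicitly.

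Second, and more seriously, the smoothness of $W$ is not under your control: the two branch fibres of $\pi_{i}$ are dictated by the Stein factorisation and may land among the four reducible fibres of the conic bundle, in which case $W$ acquires $A_{1}$-singularities. Your closing ``robustness'' remark is an assertion, not a proof. If you actually resolve and redo the computation you will find that the contradiction does persist: with one node the branch class on $\widetilde{W}$ is $B'=\tilde{\theta}^{*}(-3K_{\Sigma}-2f_{i})+E_{q}$ with $B'^{2}=22\not\equiv 0\pmod 4$, so no line bundle $\mathcal{L}'$ with $2\mathcal{L}'\equiv B'$ can exist; with two nodes one gets $\mathcal{L}'^{2}=5$, $\mathcal{L}'K_{\widetilde{W}}=-2$, hence $\mathcal{L}'(\mathcal{L}'+K_{\widetilde{W}})=3$ is odd, again impossible for a line bundle. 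So your strategy can be completed, but the nodal case analysis must be written out rather than waved at.
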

\begin{proof}
We get a similar proof from Proposition 4.2 in \cite{CCS05BM}.
\end{proof}

\begin{prop}[Note Proposition 4.4 in \cite{CCMSS0} and Proposition 4.3 in \cite{CCS05BM}]\label{finampirr}
The bicanonical morphism $\varphi$ is finite, the canonical divisor $K_{S}$ is ample, and for $i=1,2,3,4,5$, the pull-back of an irreducible curve in $|f_{i}|$ is also irreducible (possibly non-reduced).
\end{prop}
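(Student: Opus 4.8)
The three assertions are tightly linked, so the plan is to extract them all from one numerical input, namely the Picard number of $S$. Since $S$ is minimal of general type with $p_{g}(S)=0$ (whence $q(S)=0$) and $K_{S}^{2}=4$, Noether's formula gives $\chi_{top}(S)=12\chi(\mathcal{O}_{S})-K_{S}^{2}=8$ and hence $b_{2}(S)=6$; as $p_{g}(S)=0$ forces $H^{2}(S,\mathbb{Q})$ to be of type $(1,1)$, we have $\mathrm{Num}(S)_{\mathbb{Q}}=H^{2}(S,\mathbb{Q})$, a $6$-dimensional space carrying a non-degenerate intersection form. On the target, $\Sigma$ is a del Pezzo surface of degree $4$, so $\mathrm{Num}(\Sigma)_{\mathbb{Q}}$ is also $6$-dimensional. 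I would first record that the three conclusions are equivalent reformulations of a single fact: because $2K_{S}\equiv\varphi^{*}(-K_{\Sigma})$ and $-K_{\Sigma}$ is ample, an irreducible curve $C$ satisfies $K_{S}\cdot C=0$ exactly when $\varphi_{*}C=0$, i.e.\ when $C$ is a $(-2)$-curve contracted by $\varphi$; thus ``$\varphi$ finite'', ``$K_{S}$ ample'' and ``$S$ has no $(-2)$-curve'' all coincide, and it suffices to exclude a contracted curve.

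For finiteness and ampleness, I would argue that $\varphi$ can contract nothing. The pullback $\varphi^{*}\colon \mathrm{Num}(\Sigma)_{\mathbb{Q}}\to\mathrm{Num}(S)_{\mathbb{Q}}$ is injective, since $\varphi^{*}D\cdot\varphi^{*}E=(\deg\varphi)\,D\cdot E$ recovers the non-degenerate form of $\Sigma$; comparing the two $6$-dimensional spaces, $\varphi^{*}$ is an isomorphism and its image is all of $\mathrm{Num}(S)_{\mathbb{Q}}$. Now if $C$ were an irreducible curve with $\varphi_{*}C=0$, then by the projection formula $C\cdot\varphi^{*}D=\varphi_{*}C\cdot D=0$ for every $D$, so $C$ would be orthogonal to the whole of $\mathrm{Num}(S)_{\mathbb{Q}}$; non-degeneracy forces $C\equiv 0$, contradicting $C^{2}=-2$. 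Hence $\varphi$ contracts no curve and the first two assertions follow.

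For the irreducibility statement I would exploit the genus $3$ fibration $u_{i}\colon S\to\mathbb{P}^{1}$ of Proposition \ref{cong3} together with a Shioda--Tate-type count. For any fibration of $S$ over $\mathbb{P}^{1}$, the fibre class, one multisection, and, for each fibre, all but one of its irreducible components are linearly independent in $\mathrm{Num}(S)_{\mathbb{Q}}$; writing $m_{t}$ for the number of irreducible components of $u_{i}^{-1}(t)$, this yields $6=\dim\mathrm{Num}(S)_{\mathbb{Q}}\ge 2+\sum_{t}(m_{t}-1)$, so $\sum_{t}(m_{t}-1)\le 4$. On the other hand, in general position the pencil $|f_{i}|=|l-e_{i}|$ has exactly four reducible members, namely $(l-e_{i}-e_{j})+e_{j}$ for the four indices $j\neq i$; since $\varphi$ is finite, the preimage of such a member has at least the two distinct components lying over $l-e_{i}-e_{j}$ and over $e_{j}$, so $m_{t}\ge 2$ at each of these four points and $\sum_{t}(m_{t}-1)\ge 4$. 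Comparing the two bounds gives equality: every fibre of $u_{i}$ over an irreducible conic has $m_{t}=1$, that is, $\varphi^{*}(f_{i})$ is irreducible (possibly non-reduced), as claimed.

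The main obstacle is the third step: one must set up the Shioda--Tate inequality correctly for a fibration whose general fibre has genus $3$ rather than $1$, check that each of the four reducible conics downstairs really contributes a fibre with at least two distinct components upstairs (this uses the finiteness established above, to rule out a component being contracted), and confirm that the general position of $p_{1},\dots,p_{5}$ produces exactly four reducible members of $|l-e_{i}|$ and no more. Once the bound is saturated by these four fibres, there is simply no room left for a reducible fibre over an irreducible conic.
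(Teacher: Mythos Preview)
Your proof is correct. The argument for finiteness of $\varphi$ and ampleness of $K_{S}$ is essentially identical to the paper's: both compute $b_{2}(S)=6=b_{2}(\Sigma)$ via Noether's formula, note that $\varphi^{*}$ is injective on second cohomology (it rescales the non-degenerate intersection form), hence an isomorphism, and conclude that no curve can be orthogonal to the image of $\varphi^{*}$.

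For the irreducibility of pull-backs of irreducible members of $|f_{i}|$, your route differs from the paper's. The paper argues by direct construction: assuming some irreducible $f_{1}\in|f_{1}|$ has reducible pull-back, it picks a component $C$ with $C^{2}<0$, subtracts a multiple of the fibre class to make $D=C-\tfrac{C\varphi^{*}(e_{1})}{4}\varphi^{*}(f_{1})$ orthogonal to $\varphi^{*}(e_{1})$, observes that $D$ is already orthogonal to $\varphi^{*}(e_{j})$ for $j\ge 2$ (those lie in other fibres), and thereby exhibits seven independent classes in the six-dimensional space $H^{2}(S,\mathbb{Q})$. You instead invoke the Zariski/Shioda--Tate bound $\rho(S)\ge 2+\sum_{t}(m_{t}-1)$ for the fibration $u_{i}$ of Proposition~\ref{cong3} and saturate it with the four visibly reducible fibres over $e_{j}+(l-e_{i}-e_{j})$, $j\neq i$, leaving no room for a reducible fibre over an irreducible conic. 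Both arguments rest on the same numerical constraint $\rho(S)=6$; yours packages the bookkeeping more systematically and yields the bonus information that each of the four reducible fibres has exactly two irreducible components, while the paper's ad hoc construction is slightly quicker but less structural.
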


\begin{proof}
Noether's formula gives $\chi_{top}(S)=8$ by $\chi(\mathcal{O}_{S})=1$ and $K_{S}^{2}=4$. Then we get $h^{2}(S,\mathbb{Q})=h^{2}(\Sigma,\mathbb{Q})=6$ by $p_{g}(S)=q(S)=0$. So $\varphi^{*}\colon H^{2}(\Sigma,\mathbb{Q})\longrightarrow H^{2}(S,\mathbb{Q})$ is an isomorphism preserving the intersection form up to multiplication by $4$.\ Therefore $\varphi$ is finite and $K_{S}$ is ample.

For an irreducible curve $f_{1}\in|f_{1}|$, if $\varphi^{*}(f_{1})$ is reducible, then it contains an irreducible component $C$ with $C^{2}<0$. Put $D=C-\frac{C\varphi^{*}(e_{1})}{4}\varphi^{*}(f_{1})$.\ Then $D^{2}=C^{2}<0$, and $D\varphi^{*}(e_{1})=0$. And $\left(C-\frac{C\varphi^{*}(e_{1})}{4}\varphi^{*}(f_{1})\right)\varphi^{*}(e_{i})=0$ for $i=2,3,4,5$ since $e_{i}$ is contained in one fiber of the pencil $|f_{1}|$.\ We obtain that the intersection matrix of $\varphi^{*}(l),\ C-\frac{C\varphi^{*}(e_{1})}{4}\varphi^{*}(f_{1}),\ \varphi^{*}(e_{i}),\ i=1,2,3,4,5$ has rank $7$.\ But it is a contradiction because $h^{2}(S,\mathbb{Q})=6$. Thus $\varphi^{*}(f_{1})$ is irreducible. We can similarly prove the other cases.
\end{proof}

\begin{lemma}[Lemma 4.4 in \cite{CCS05BM}]\label{Mm}
Let $\phi\colon T'\longrightarrow T$ be a finite morphism between two smooth surfaces. Let $h$ be a divisor on $T$ such that $|\phi^{*}(h)|=\phi^{*}(|h|)$. Let $M$ be a divisor on $T'$ such that the linear system $|M|$ has no fixed part. Suppose that $\phi^{*}(h)-M$ is effective. Then there exists a divisor $m\subset T$ such that $|M|=\phi^{*}(|m|)$. Furthermore the line bundle $h-m$ is effective.
\end{lemma}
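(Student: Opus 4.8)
The plan is to transport the whole discussion, which takes place upstairs on $T'$, back down to $T$ by pulling members of $|M|$ through $\phi^{*}$, using the hypothesis $|\phi^{*}(h)|=\phi^{*}(|h|)$ as the only bridge between the two surfaces. First I would fix an effective divisor $N\equiv\phi^{*}(h)-M$, which exists by assumption, and take a general $M'\in|M|$. Then $M'+N$ is effective and lies in $|\phi^{*}(h)|=\phi^{*}(|h|)$, so $M'+N=\phi^{*}(D')$ for some effective $D'\equiv h$. The elementary but essential remark is that $\phi^{*}$ is injective on honest divisors: if $\phi^{*}(D_{1})=\phi^{*}(D_{2})$, applying $\phi_{*}$ and using $\phi_{*}\phi^{*}=(\deg\phi)\,\mathrm{id}$ gives $D_{1}=D_{2}$, and likewise $\phi^{*}(E)\ge0$ forces $E\ge0$. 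Hence $D'$ is uniquely determined by $M'$, giving a well-defined assignment $M'\mapsto D'$.

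Next I would package this into linear systems. On sections, the set $V:=\{M'+N:M'\in|M|\}$ is the image of $H^{0}(M)$ under multiplication by the section cutting out $N$, hence a linear subsystem of $|\phi^{*}(h)|$; pulling it back through the bijection $\phi^{*}\colon|h|\xrightarrow{\ \sim\ }|\phi^{*}(h)|$ it corresponds to a linear subsystem $\Lambda\subseteq|h|$ with $\phi^{*}(\Lambda)=V$. I would then split off the fixed part, writing $\Lambda=n+\Lambda_{0}$, where $n\ge0$ is the fixed divisor of $\Lambda$ and $\Lambda_{0}$ has no fixed component.

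The crux, and the step I expect to be the main obstacle, is to prove $N=\phi^{*}(n)$. On one side, since $|M|$ has no fixed part, the fixed divisor of $V=|M|+N$ is exactly $N$. On the other side, $V=\phi^{*}(n)+\phi^{*}(\Lambda_{0})$, so its fixed divisor is $\phi^{*}(n)$ plus the fixed divisor of $\phi^{*}(\Lambda_{0})$. The delicate point is that pulling back the fixed-part-free system $\Lambda_{0}$ must introduce no new fixed component; this is where finiteness of $\phi$ enters. A common component of the divisors $\phi^{*}(D_{0}')$ would have to map onto a common component of the $D_{0}'$, since a curve cannot collapse to a point under a finite map, contradicting $\mathrm{Fix}(\Lambda_{0})=0$. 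Hence $\mathrm{Fix}(\phi^{*}\Lambda_{0})=0$ and $\mathrm{Fix}(V)=\phi^{*}(n)$. Comparing the two computations gives $N=\phi^{*}(n)$, and then $\phi^{*}(D')=M'+N=M'+\phi^{*}(n)$ together with $D'=n+D_{0}'$ yields $M'=\phi^{*}(D_{0}')$ for every $M'\in|M|$.

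Finally I would upgrade $\Lambda_{0}$ to the complete system. Setting $m:=h-n$, every $D_{0}'\in\Lambda_{0}$ is effective with $D_{0}'\equiv m$, so $\Lambda_{0}\subseteq|m|$ and $|M|=\phi^{*}(\Lambda_{0})\subseteq\phi^{*}(|m|)$. Conversely $\phi^{*}(m)=\phi^{*}(h)-N\equiv M$, so $\phi^{*}(|m|)\subseteq|M|$; the two inclusions squeeze to $|M|=\phi^{*}(|m|)$. Since $n$ is a fixed divisor it is effective, and $h-m=n\ge0$ gives the asserted effectivity of $h-m$. In short, the two "no fixed part" hypotheses and the finiteness of $\phi$ are used precisely to make the fixed-part bookkeeping in the third paragraph exact, while everything else is formal manipulation of linear systems through the injective, effectivity-preserving map $\phi^{*}$.
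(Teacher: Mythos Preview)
The paper does not supply its own proof of this lemma: it is quoted verbatim as Lemma~4.4 of \cite{CCS05BM} and used as a black box, so there is nothing in the present paper to compare your argument against.

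That said, your proposed proof is correct and is essentially the standard argument. The key steps---embedding $|M|+N$ as a linear subsystem of $\phi^{*}(|h|)$, identifying it with a subsystem $\Lambda\subseteq|h|$ via the bijection $\phi^{*}$, splitting off the fixed part $n$ of $\Lambda$, and then matching $N=\phi^{*}(n)$ by comparing fixed loci---are all sound. The one place that requires care, and you handle it properly, is showing that $\phi^{*}(\Lambda_{0})$ has no fixed component: your observation that any common component $C'$ of the $\phi^{*}(D_{0}')$ maps under the finite morphism $\phi$ onto an irreducible curve $C=\phi(C')$ which must then lie in every $D_{0}'$, contradicting $\mathrm{Fix}(\Lambda_{0})=0$, is exactly the right use of finiteness. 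The final squeeze $|M|=\phi^{*}(|m|)$ and $h-m=n\ge0$ is then formal.
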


\begin{lemma}[Note Lemma 4.5 in \cite{CCS05BM}]\label{ne1e}
There does not exist a divisor $d$ on $\Sigma$ such that $h^{0}(\Sigma,\mathcal{O}_{\Sigma}(d))>1$ and that the line bundle $-K_{\Sigma}-2d$ is effective.
\end{lemma}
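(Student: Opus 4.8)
The plan is to argue by contradiction using only the ampleness of $-K_{\Sigma}$ and the numerical datum $(-K_{\Sigma})^{2}=4$. Suppose such a divisor $d$ exists. Since $h^{0}(\Sigma,\mathcal{O}_{\Sigma}(d))>1$, the class $d$ is linearly equivalent to a nonzero effective divisor whose linear system moves; in particular $d\not\equiv 0$, so $(-K_{\Sigma})d\ge 1$ because $-K_{\Sigma}$ is ample. On the other hand, as $-K_{\Sigma}-2d$ is effective and $-K_{\Sigma}$ is nef, we have $(-K_{\Sigma})(-K_{\Sigma}-2d)\ge 0$, which together with $(-K_{\Sigma})^{2}=4$ yields $(-K_{\Sigma})d\le 2$. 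Hence $(-K_{\Sigma})d\in\{1,2\}$, and I would eliminate each value separately.

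If $(-K_{\Sigma})d=1$, then writing $d$ as a sum of irreducible curves and using that each component meets the ample class $-K_{\Sigma}$ positively, $d$ must be a single irreducible curve $C$ with $(-K_{\Sigma})C=1$. Such a curve is a line for the embedding $\Sigma\subset\mathbb{P}^{4}$ given by $|-K_{\Sigma}|$; equivalently, the Hodge index inequality $(C\cdot(-K_{\Sigma}))^{2}\ge C^{2}(-K_{\Sigma})^{2}$ gives $C^{2}\le 0$, and adjunction (which makes $C^{2}$ odd) then forces $C^{2}=-1$. Thus $C$ is a $(-1)$-curve, hence rigid, so $h^{0}(\Sigma,\mathcal{O}_{\Sigma}(C))=1$, contradicting $h^{0}(\Sigma,\mathcal{O}_{\Sigma}(d))>1$.

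If $(-K_{\Sigma})d=2$, the inequality above becomes an equality $(-K_{\Sigma})(-K_{\Sigma}-2d)=0$; since $-K_{\Sigma}$ is ample and $-K_{\Sigma}-2d$ is effective, this is possible only when $-K_{\Sigma}-2d\equiv 0$, i.e.\ $-K_{\Sigma}\equiv 2d$. But $-K_{\Sigma}\equiv 3l-\sum_{i=1}^{5}e_{i}$ is not divisible by $2$ in $\Pic(\Sigma)=\mathbb{Z}\langle l,e_{1},\dots,e_{5}\rangle$, the coefficient of $l$ being odd, and this contradiction finishes the proof. I expect the case $(-K_{\Sigma})d=2$ to be the main obstacle, since a priori many moving classes of anticanonical degree $2$ exist (pencils of conics give genus-$0$ fibrations of $\Sigma$); the point is that the numerical bound is sharp exactly when $-K_{\Sigma}-2d$ is trivial, which reduces the whole case to the arithmetic non-divisibility of $-K_{\Sigma}$ in the Picard lattice.
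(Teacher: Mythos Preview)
Your argument is correct, and it differs in spirit from the paper's proof. The paper works directly in the basis $l,e_{1},\dots,e_{5}$ of $\Pic(\Sigma)$: writing $d\equiv al-\sum b_{i}e_{i}$, effectivity of $-K_{\Sigma}-2d$ forces $a\le 1$ (via intersection with the nef class $l$), while $h^{0}(d)>1$ forces $a\ge 1$; then with $a=1$ at most one $b_{i}$ can be positive, and one checks that $-K_{\Sigma}-2d\equiv l-\sum(1-2b_{i})e_{i}$ cannot be effective because it would require a line through at least four of the five blown-up points. Your route is instead intrinsic: you bound $(-K_{\Sigma})d$ between $1$ and $2$ by ampleness and the effectivity hypothesis, then dispose of degree~$1$ via Hodge index plus adjunction (forcing a $(-1)$-curve, hence a rigid class) and of degree~$2$ by observing that equality in the ampleness bound forces $-K_{\Sigma}\equiv 2d$, which fails since $-K_{\Sigma}$ is primitive in the lattice. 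The paper's computation is shorter once the explicit Picard basis is in hand, while your argument is coordinate-free and would transport essentially unchanged to other del Pezzo surfaces of small degree; the only place you still need the lattice description is the final non-divisibility check, which is unavoidable.
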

\begin{proof}
Suppose that there exists such a divisor $d$. Assume $d\equiv al-\sum_{i=1}^{5}b_{i}e_{i}$ for some integers $a,\ b_{i},\ i=1,2,3,4,5$. Then $a\le1$ because $-K_{\Sigma}-2d$ is effective. On the other hand, $a\ge 1$ by the condition $h^{0}(\Sigma,\mathcal{O}_{\Sigma}(d))>1$. Thus $a=1$, and at most one of $b_{1},\cdots,b_{5}$ is positive. Then the line bundle $-K_{\Sigma}-2d\equiv l-\sum_{i=1}^{5}(1-2b_{i})e_{i}$ cannot be effective since there is no line on $\mathbb{P}^{2}$ passing through $3$ points in general position.
\end{proof}

We prove the following lemma as one of Lemma 4.6 in \cite{CCS05BM} since we have Lemmas \ref{Mm} and \ref{ne1e}. 

\begin{lemma}[Note Lemma 4.6 in \cite{CCS05BM}]\label{2Deff1}
Let $D\subset S$ be a divisor. If there exists a divisor $d$ on $\Sigma$ such that

$(i)$ $\varphi^{*}(d)\equiv 2D;$

$(ii)$ the line bundle $-K_{\Sigma}-d$  is effective,\\
then $h^{0}(S,\mathcal{O}_{S}(D))\le 1$.
\end{lemma}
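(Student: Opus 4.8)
The plan is to argue by contradiction: assume $h^{0}(S,\mathcal{O}_{S}(D))\ge 2$ and manufacture from $D$ a divisor on $\Sigma$ that violates Lemma \ref{ne1e}. Write $|D|=|M|+F$ with $M$ the moving part (so $|M|$ has no fixed part) and $F\ge 0$ the fixed part; then $D\equiv M+F$ and $h^{0}(S,\mathcal{O}_{S}(M))=h^{0}(S,\mathcal{O}_{S}(D))\ge 2$. The strategy is to feed $M$ into Lemma \ref{Mm} to descend $|M|$ to a pencil $|m|$ on $\Sigma$, then to use the ampleness of $K_{S}$ (Proposition \ref{finampirr}) to force $F=0$, after which hypotheses $(i)$–$(ii)$ identify $d$ with $2m$ and contradict Lemma \ref{ne1e}.

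First I would apply Lemma \ref{Mm} with $\phi=\varphi$ and $h=-K_{\Sigma}$. The needed compatibility $|\varphi^{*}(-K_{\Sigma})|=\varphi^{*}(|-K_{\Sigma}|)$ holds because $\varphi$ is the complete bicanonical morphism: $\Sigma\subset\mathbb{P}^{4}$ is embedded by $|-K_{\Sigma}|$, $\varphi^{*}(-K_{\Sigma})\equiv 2K_{S}$, and $\varphi^{*}$ is an isomorphism on global sections (both spaces have dimension $5$). To verify the last hypothesis, that $\varphi^{*}(-K_{\Sigma})-M$ is effective, I note that $(i)$–$(ii)$ give $\varphi^{*}(-K_{\Sigma}-d)\equiv 2K_{S}-2D$ effective while $2D-M\equiv M+2F$ is effective, so $2K_{S}-M$ is effective. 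Lemma \ref{Mm} then produces a divisor $m$ on $\Sigma$ with $|M|=\varphi^{*}(|m|)$ and $-K_{\Sigma}-m$ effective; in particular $h^{0}(\Sigma,\mathcal{O}_{\Sigma}(m))=h^{0}(S,\mathcal{O}_{S}(M))\ge 2$. Since $-K_{\Sigma}$ is ample and every nonzero effective divisor of anticanonical degree $\le 1$ on the del Pezzo surface $\Sigma$ is a single line with $h^{0}=1$, the bound $h^{0}(\Sigma,\mathcal{O}_{\Sigma}(m))\ge 2$ forces $(-K_{\Sigma})\cdot m\ge 2$.

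The crux is showing $F=0$. From $\varphi^{*}(m)\equiv M$, $\varphi^{*}(d)\equiv 2D$ one gets $2F\equiv\varphi^{*}(d-2m)$, and since $\varphi^{*}$ multiplies the intersection form by $4$ (Proposition \ref{finampirr}) and $K_{S}\equiv\tfrac12\varphi^{*}(-K_{\Sigma})$, this yields $K_{S}\cdot F=(-K_{\Sigma})\cdot d-2(-K_{\Sigma})\cdot m$. Hypothesis $(ii)$ with ampleness of $-K_{\Sigma}$ gives $(-K_{\Sigma})\cdot d\le(-K_{\Sigma})^{2}=4$, so together with $(-K_{\Sigma})\cdot m\ge 2$ we get $K_{S}\cdot F\le 0$. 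But $K_{S}$ is ample and $F$ effective, so $K_{S}\cdot F\ge 0$; hence $K_{S}\cdot F=0$ and therefore $F=0$.

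With $F=0$ we have $D=M$, so $\varphi^{*}(d)\equiv 2D\equiv\varphi^{*}(2m)$; as $\varphi^{*}$ is injective on $\Pic(\Sigma)$ (because $\Sigma$ is rational and $\varphi^{*}$ is injective on $H^{2}$), this forces $d\equiv 2m$. Then $(ii)$ becomes the statement that $-K_{\Sigma}-2m\equiv -K_{\Sigma}-d$ is effective, while $h^{0}(\Sigma,\mathcal{O}_{\Sigma}(m))\ge 2>1$ — precisely the configuration forbidden by Lemma \ref{ne1e}. This contradiction establishes $h^{0}(S,\mathcal{O}_{S}(D))\le 1$. I expect the main obstacle to be the vanishing of $F$: this is the one place where ampleness of $K_{S}$ must be combined with the two numerical inputs $(-K_{\Sigma})\cdot d\le 4$ and $(-K_{\Sigma})\cdot m\ge 2$ so as to pinch $K_{S}\cdot F$ to zero, and keeping the intersection-form scaling factor correct there is delicate; the secondary point requiring care is the verification of the global-section compatibility that licenses the use of Lemma \ref{Mm}.
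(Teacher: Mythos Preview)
Your argument is correct, but it diverges from the paper's after the common first step. Both proofs write $|D|=|M|+F$, invoke Lemma~\ref{Mm} with $h=-K_{\Sigma}$ to obtain $m$ on $\Sigma$ with $|M|=\varphi^{*}(|m|)$ and $h^{0}(\Sigma,\mathcal{O}_{\Sigma}(m))\ge 2$, and aim to contradict Lemma~\ref{ne1e}. From there the paper proceeds directly: it picks a specific $M_{1}\in|M|$ and an effective $N$ with $2M_{1}+2F+N\in|2K_{S}|=\varphi^{*}(|-K_{\Sigma}|)$, writes this as $\varphi^{*}(h)$ with $h\in|-K_{\Sigma}|$ and $2M_{1}=\varphi^{*}(2m_{1})$, and observes that $\varphi^{*}(h-2m_{1})=2F+N\ge 0$, whence $h-2m_{1}\ge 0$ because $\varphi$ is finite surjective. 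In particular the paper never needs $F=0$, nor $d\equiv 2m$. Your route instead kills $F$ numerically: from $(-K_{\Sigma})\cdot d\le 4$ and $(-K_{\Sigma})\cdot m\ge 2$ you squeeze $K_{S}\cdot F\le 0$, then use ampleness of $K_{S}$ (Proposition~\ref{finampirr}) to conclude $F=0$, and finally use injectivity of $\varphi^{*}$ on $\Pic(\Sigma)$ to identify $d$ with $2m$. The paper's approach is shorter and avoids the auxiliary numerical facts about del Pezzo lines and the injectivity of $\varphi^{*}$; your approach is a bit more hands-on but has the side benefit of actually determining $F$ and $d$, which can be convenient when the lemma is reused (as in the proof of Lemma~\ref{invariants}~$(iii)$).
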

\begin{proof}
Suppose $h^{0}(S,\mathcal{O}_{S}(D))>1$. We may write $|D|=|M|+F$ where $|M|$ is the moving part and $F$ is the fixed part. Since $|2K_{S}|=|\varphi^{*}(-K_{\Sigma})|=\varphi^{*}(|-K_{\Sigma}|)$ and $\varphi^{*}(-K_{\Sigma})-M>\varphi^{*}(-K_{\Sigma}-d)$ is effective, there is a divisor $m$ on $\Sigma$ such that $\varphi^{*}(|m|)=|M|$ by Lemma \ref{Mm}. Choose an element $M_{1}\in |M|$ and an effective divisor $N$ on $S$ such that $2M_{1}+2F+N\equiv \varphi^{*}(-K_{\Sigma})$. We find $h\in|-K_{\Sigma}|$ and $m_{1}\in|m|$ such that $2M_{1}+2F+N=\varphi^{*}(h)$ and $2M_{1}=\varphi^{*}(2m_{1})$. Thus we conclude that $h-2m_{1}$ is effective. It is a contradiction by Lemma \ref{ne1e}.
\end{proof}

Now we investigate the pull-backs of ($-1$)-curves on the surface $\Sigma$ via the bicanonical morphism $\varphi\colon S\longrightarrow \Sigma \subset \mathbb{P}^{4}$. There are sixteen $(-1)$-curves on $\Sigma$ which are $e_{i},\ e'_{j},\ g_{j},\ h_{j},$ $\gamma$ and $\delta$ for $i=1,2,3,4,5$ and $j=1,2,3$.

\begin{lemma}[Lemma 5.1 in \cite{CCMSS0}]\label{twocases}
Let $C\subset\Sigma$ be a $(-1)$-curve. Then we have either

$(i)$ $\varphi^{*}(C)$ is a reduced smooth rational $(-4)$-curve$;$ or

$(ii)$ $\varphi^{*}(C)=2E$ where $E$ is an irreducible curve with $E^{2}=-1,\ K_{S}E=1$.
\end{lemma}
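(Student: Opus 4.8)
The plan is to study the effective divisor $Z:=\varphi^{*}(C)$ entirely through its numerical invariants, exploiting the strong constraint that $\varphi^{*}$ is an isomorphism on $H^{2}(\,\cdot\,,\mathbb{Q})$. First I would record two facts established earlier: since $\Sigma\subset\mathbb{P}^{4}$ is embedded by $|-K_{\Sigma}|$ (Notation \ref{notdel}) we have $\varphi^{*}(-K_{\Sigma})\equiv 2K_{S}$, and by the proof of Proposition \ref{finampirr} the map $\varphi^{*}$ multiplies the intersection form by $4$. A $(-1)$-curve on the del Pezzo surface $\Sigma$ satisfies $C^{2}=-1$ and $-K_{\Sigma}C=1$, so I compute
\[Z^{2}=4C^{2}=-4,\qquad K_{S}Z=\tfrac{1}{2}\varphi^{*}(-K_{\Sigma})\cdot Z=\tfrac{1}{2}\cdot 4\,(-K_{\Sigma}C)=2,\]
which gives $2p_{a}(Z)-2=Z^{2}+K_{S}Z=-2$, i.e. $p_{a}(Z)=0$. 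The lemma then becomes the problem of classifying which effective divisors with these invariants can actually arise as $\varphi^{*}(C)$.

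The heart of the argument is a numerical rigidity statement. Write $Z=\sum_{i}m_{i}Z_{i}$ with the $Z_{i}$ distinct and irreducible. Because $\varphi$ is finite (Proposition \ref{finampirr}), $\varphi^{-1}(C)$ is a curve all of whose components dominate $C$; set $d_{i}:=\deg(Z_{i}\to C)\ge 1$, so that $\varphi_{*}Z_{i}=d_{i}C$ and $\sum_{i}m_{i}d_{i}=\deg\varphi=4$. Since $\varphi^{*}$ is surjective onto $H^{2}(S,\mathbb{Q})$, I would write $[Z_{i}]=\varphi^{*}(w_{i})$ for some $w_{i}$, and then compare, for every class $A$ on $\Sigma$, the two expressions $[Z_{i}]\cdot\varphi^{*}(A)=\varphi_{*}Z_{i}\cdot A=d_{i}(CA)$ and $\varphi^{*}(w_{i})\cdot\varphi^{*}(A)=4(w_{i}A)$. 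Nondegeneracy of the form then forces $w_{i}=\tfrac{d_{i}}{4}C$, so that
\[[Z_{i}]=\tfrac{d_{i}}{4}[Z],\qquad Z_{i}^{2}=\tfrac{d_{i}^{2}}{16}Z^{2}=-\tfrac{d_{i}^{2}}{4}.\]

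Integrality of $Z_{i}^{2}$ now forces each $d_{i}$ to be even, and since $m_{i}d_{i}\le 4$ we get $d_{i}\in\{2,4\}$. I would then run the short case analysis on $\sum_{i}m_{i}d_{i}=4$. If some $d_{i}=4$, then $Z$ is a single reduced component with $Z^{2}=-4$, $K_{S}Z=2$ and $p_{a}=0$, hence a smooth rational $(-4)$-curve, which is case $(i)$. Otherwise all $d_{i}=2$ and $\sum_{i}m_{i}=2$, leaving exactly two possibilities: a single component with $m=2$, giving $Z=2E$ with $E^{2}=-1$ and $K_{S}E=\tfrac{1}{2}K_{S}Z=1$, which is case $(ii)$; or two distinct reduced components $Z_{1},Z_{2}$ with $[Z_{1}]=[Z_{2}]=\tfrac{1}{2}[Z]$. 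This last configuration is the one that must be excluded, and it is killed by the observation $Z_{1}Z_{2}=\tfrac{1}{4}Z^{2}=-1<0$, which is impossible for two distinct irreducible curves.

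The main obstacle is not any single estimate but rather setting up the rigidity $[Z_{i}]=\tfrac{d_{i}}{4}[Z]$ correctly from the isomorphism property of $\varphi^{*}$, together with the projection formula; once this is in place, integrality of the self-intersections and the effectivity constraint $Z_{1}Z_{2}\ge 0$ do essentially all of the remaining work, so the only genuinely non-formal input is the negativity $Z_{1}Z_{2}<0$ ruling out a splitting into two distinct curves. Before running the argument I would double-check only the two structural facts it rests on, namely that $\varphi^{*}(-K_{\Sigma})\equiv 2K_{S}$ and that every component of $\varphi^{-1}(C)$ dominates $C$, both of which follow from the embedding of $\Sigma$ by $|-K_{\Sigma}|$ and the finiteness of $\varphi$.
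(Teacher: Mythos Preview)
The paper does not supply its own proof of this lemma; it simply quotes it as Lemma~5.1 of \cite{CCMSS0}. So there is no in-paper argument to compare against, only the cited source.

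Your argument is correct and is essentially the standard one: the key point is exactly the rigidity $[Z_i]=\tfrac{d_i}{4}[Z]$ coming from the fact that $\varphi^{*}\colon H^{2}(\Sigma,\mathbb{Q})\to H^{2}(S,\mathbb{Q})$ is an isomorphism (Proposition~\ref{finampirr}) together with the projection formula, after which integrality of $Z_i^{2}=-d_i^{2}/4$ forces $d_i\in\{2,4\}$ and the short case split finishes. The exclusion of the two-component case via $Z_1Z_2=-1<0$ is clean and is precisely the non-formal ingredient. Nothing is missing; you might only add a word that an irreducible curve with $p_a=0$ is automatically smooth and rational, to justify the phrase ``smooth rational'' in case~$(i)$.
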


\begin{lemma}\label{three-4}
There are at most three disjoint $(-4)$-curves on $S$.
\end{lemma}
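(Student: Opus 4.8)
The plan is to recast the statement as a question about $(-1)$-curves on the del Pezzo surface $\Sigma$ and then to cut down the admissible configurations using the ramification of $\varphi$. First I would classify the $(-4)$-curves. If $\Gamma\subset S$ is a $(-4)$-curve, then adjunction together with the ampleness of $K_{S}$ (Proposition~\ref{finampirr}) gives $K_{S}\Gamma=2$. Since $\varphi^{*}\colon H^{2}(\Sigma,\mathbb{Q})\to H^{2}(S,\mathbb{Q})$ is an isomorphism multiplying the intersection form by $4$ and $\varphi^{*}(-K_{\Sigma})\equiv 2K_{S}$, writing $[\Gamma]=\varphi^{*}(x)$ forces $x^{2}=-1$ and $(-K_{\Sigma})x=1$ (with $x$ integral, a $(-4)$-curve not being one half of a pullback); thus $x$ is one of the sixteen $(-1)$-classes $e_{i},e'_{j},g_{j},h_{j},\gamma,\delta$. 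Comparing with Lemma~\ref{twocases}: a line $C$ of type $(ii)$ has class $\tfrac12\varphi^{*}(C)=[E]$, and no irreducible $(-4)$-curve can lie in $\varphi^{*}(C)=2[E]$ (it would meet $E$ negatively); hence $\Gamma=\varphi^{*}(C)$ for a line $C$ of type $(i)$. Because $\varphi^{*}(C)\varphi^{*}(C')=4\,CC'$, two such $(-4)$-curves are disjoint exactly when the lines $C,C'$ are disjoint. So the problem becomes: at most three of any set of pairwise disjoint lines of $\Sigma$ can be of type $(i)$.

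The naive bound is too weak. The form on $H^{2}(S,\mathbb{Q})$ has signature $(1,5)$, so Hodge index permits five pairwise disjoint $(-4)$-curves, exactly as $\Sigma$ carries the five disjoint lines $e_{1},\dots,e_{5}$; the sharpening to $3$ must use ramification. From $K_{S}=\varphi^{*}K_{\Sigma}+R$ and $\varphi^{*}K_{\Sigma}\equiv-2K_{S}$ the effective ramification divisor is $R\equiv3K_{S}$, with $K_{S}R=12$. Each type-$(ii)$ curve $E$ (where $\varphi^{*}(C)=2E$) occurs in $R$ and satisfies $K_{S}E=1$, so I may write $R=\sum_{(ii)}E+R'$ with $R'$ effective; intersecting with $K_{S}$ gives $K_{S}R'=a-4$, where $a$ is the number of type-$(i)$ lines, whence $a\ge4$. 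Pushing forward, $\varphi_{*}R'\equiv 2\bigl(\sum_{(i)}C+K_{\Sigma}\bigr)$, so this class is effective on $\Sigma$; and since no type-$(i)$ curve lies in $R$, for each type-$(i)$ $(-4)$-curve $\Gamma_{a}$ one has $R'\Gamma_{a}\ge0$, which reads $\sum_{C\ \mathrm{type}\ (i),\,C\ne C_{a}}C\,C_{a}\ge2$.

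To conclude I would assume four pairwise disjoint type-$(i)$ lines $C_{1},\dots,C_{4}$ and seek a contradiction. When $a=4$ this is immediate: then $R'=0$, so $\sum_{(i)}C\equiv-K_{\Sigma}$, while $(C_{1}+\dots+C_{4})^{2}=-4\neq4=(-K_{\Sigma})^{2}$. In general one runs through the configurations of four pairwise disjoint lines and, for each set of type-$(i)$ completions, plays the effectivity of $\varphi_{*}R'$ against the inequalities $R'\Gamma_{a}\ge0$: for example $\{e_{1},\dots,e_{4},\delta,c_{12}\}$ is excluded because $R'\Gamma_{3}=-2<0$, whereas $\{e_{1},\dots,e_{4},\delta,c_{15},\dots,c_{45}\}$ is excluded because $\varphi_{*}R'\equiv 2(3l-4e_{5})$ is not effective. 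Here $c_{jk}$ denotes the line $l-e_{j}-e_{k}$.

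The main obstacle is precisely this final case analysis. The two conditions above are necessary but, taken alone, not obviously sufficient — for instance the completion $\{e_{1},\dots,e_{4},\delta,c_{12},c_{34}\}$ satisfies both (one computes $\sum_{(i)}C+K_{\Sigma}\equiv l$ and all four intersection sums equal to $2$) — so the argument must be supplemented by further positivity coming from the degree-$4$ cover: that each prime component of $R'$ pushes to an honest branch curve, together with the constraints $p_{g}=q=0$ and $K_{S}^{2}=4$. Organising the combinatorics of the sixteen lines so that these constraints uniformly rule out four pairwise disjoint type-$(i)$ lines, and hence pin the bound at the sharp value $3$ rather than $4$ or $5$, is where I expect the real difficulty to lie.
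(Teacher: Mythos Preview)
Your proposal takes an entirely different route from the paper, and the route you chose is both far more laborious and, as you yourself acknowledge, incomplete.

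The paper's proof is a two-line application of Miyaoka's orbifold Bogomolov--Miyaoka--Yau inequality \cite{mnqs}: contracting $r$ pairwise disjoint smooth rational $(-4)$-curves on $S$ produces $r$ cyclic quotient singularities of type $\frac{1}{4}(1,1)$, and Miyaoka's bound then reads
\[
\frac{25}{12}\,r \;\le\; c_{2}(S)-\tfrac{1}{3}K_{S}^{2} \;=\; 8-\tfrac{4}{3} \;=\; \tfrac{20}{3},
\]
whence $r\le 3$. No information about $\varphi$ is used; the inequality depends only on the numerical invariants $K_{S}^{2}=4$ and $c_{2}(S)=8$.

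By contrast, you attempt to classify the $(-4)$-curves as pullbacks of lines on $\Sigma$ and then bound configurations of disjoint type-$(i)$ lines via ramification. Beyond the unfinished case analysis, there is already a gap at the first step: your claim that the rational class $x$ with $[\Gamma]=\varphi^{*}(x)$ is integral is not justified. The map $\varphi^{*}\colon H^{2}(\Sigma,\mathbb{Z})\to H^{2}(S,\mathbb{Z})$ is injective but not surjective (its image has index $4^{3}$ in the unimodular lattice $H^{2}(S,\mathbb{Z})/\mathrm{tors}$; indeed the type-$(ii)$ curves $E$ with $[E]=\tfrac12\varphi^{*}(C)$ already witness non-integral $x$), so one would need a separate argument---for instance via $\varphi_{*}\Gamma$ and the projection formula---to pin down $x$. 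Even granting that, the combinatorial endgame you outline is open-ended, whereas the intended lemma is meant to be a quick numerical input used later (in Proposition~\ref{c1c2c3}). The moral: when you see a statement bounding the number of disjoint negative rational curves on a surface of general type, reach for Miyaoka's inequality first.
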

\begin{proof}
Let $r$ be the cardinality of a set of smooth disjoint rational curves with self-intersection number $-4$ on $S$. Then 
\[\frac{25}{12}r\le c_{2}(S)-\frac{1}{3}K_{S}^{2}=\frac{20}{3}\]
by \cite{mnqs}
which is $r\le 3$.
\end{proof}

\begin{rmk}\label{cremona}
\rm{We consider an exceptional curve $e$ on $\Sigma$ which is different from $\delta$ and is not an $\rho$-exceptional curve (i.e.\ $\rho(e)$ is not a point in $\mathbb{P}^{2}$). Then we can find an automorphism $\tau$ on $\Sigma$ induced by a Cremona transformation with respect to 3 points among 5 points $p_{1},p_{2},p_{3},p_{4},p_{5}$ in general position on $\mathbb{P}^{2}$ such that an exceptional curve $\tau(e)$ on $\Sigma$ is different from $\delta$ and is an $\rho$-exceptional curve.}
\end{rmk}

\begin{prop}\label{theretwo}
There exist at least two disjoint $(-1)$-curves different from $\delta$ on $\Sigma$ such that those pull-backs by the bicanonical morphism $\varphi$ are $(-4)$-curves.
\end{prop}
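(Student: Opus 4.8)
The plan is to split the sixteen $(-1)$-curves of $\Sigma$ according to Lemma \ref{twocases}, to bound from above the number of them in case (ii) by a ramification computation, and then to produce a disjoint pair in case (i) from the combinatorics of the lines on a del Pezzo surface of degree $4$.

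First I would set $\mathcal S_{\mathrm i}$ (resp. $\mathcal S_{\mathrm{ii}}$) to be the set of $(-1)$-curves $C\subset\Sigma$ for which $\varphi^{*}(C)$ is a reduced $(-4)$-curve (resp. $\varphi^{*}(C)=2E_{C}$), so that by Lemma \ref{twocases} these two sets partition the sixteen $(-1)$-curves. Since $\varphi$ is finite of degree $4$ and $\Sigma\subset\mathbb P^{4}$ is anticanonically embedded, $\varphi^{*}(-K_{\Sigma})\equiv 2K_{S}$, whence the ramification divisor is $R=K_{S}-\varphi^{*}K_{\Sigma}\equiv 3K_{S}$. For each $C\in\mathcal S_{\mathrm{ii}}$ the reduced curve $E_{C}$ is a component of $R$ and satisfies $K_{S}E_{C}=1$ by Lemma \ref{twocases}(ii), and the $E_{C}$ are pairwise distinct because $\varphi(E_{C})=C$. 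Hence $\sum_{C\in\mathcal S_{\mathrm{ii}}}E_{C}$ is a subdivisor of $R$, and since $K_{S}$ is ample (Proposition \ref{finampirr}),
\[
|\mathcal S_{\mathrm{ii}}|=\sum_{C\in\mathcal S_{\mathrm{ii}}}K_{S}E_{C}=K_{S}\Big(\sum_{C\in\mathcal S_{\mathrm{ii}}}E_{C}\Big)\le K_{S}R=3K_{S}^{2}=12 .
\]
Thus $|\mathcal S_{\mathrm i}|\ge 4$, so at least three curves of $\mathcal S_{\mathrm i}$ are different from $\delta$.

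The last step I would base on the fact that the intersection graph of the sixteen $(-1)$-curves on a degree-$4$ del Pezzo surface is triangle-free: each such curve meets exactly five of the others, and no two curves that meet have a common neighbour (this is the Clebsch graph, strongly regular with parameters $(16,5,0,2)$). In the basis of Notation \ref{notdel} this is a direct check --- for instance $e_{1}$ meets precisely $e'_{2},e'_{3},g_{1},h_{1},\delta$, and these five are pairwise disjoint --- and by the symmetry of Remark \ref{cremona} it suffices to verify it at one curve. Consequently any three of the $(-1)$-curves contain two that are disjoint; applying this to three curves of $\mathcal S_{\mathrm i}\setminus\{\delta\}$ yields two disjoint $(-1)$-curves, both different from $\delta$, whose pullbacks are $(-4)$-curves.

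I expect the ramification count to be the main point: one must check that each case-(ii) curve contributes a distinct prime divisor of $R\equiv 3K_{S}$ with $K_{S}E_{C}=1$, so that $|\mathcal S_{\mathrm{ii}}|\le K_{S}R=12$. This bound is sharp --- the Burniat surface has $|\mathcal S_{\mathrm{ii}}|=12$ --- so it leaves exactly the margin $|\mathcal S_{\mathrm i}|\ge4$ required to discard $\delta$ and still have three curves on which to run the triangle-free argument.
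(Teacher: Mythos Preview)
Your proof is correct and takes a genuinely different route from the paper's. Both arguments rest on the same ramification identity $R\equiv 3K_{S}$ and on the fact that each case--(ii) curve $C$ contributes a distinct reduced component $E_{C}$ of $R$ with $K_{S}E_{C}=1$; but the paper exploits this by three successive contradictions (first assuming all fifteen non-$\delta$ lines are in case~(ii) and finding a residual $R_{1}$ with $2R_{1}\equiv\varphi^{*}(-l)$, then relabelling via Cremona and repeating with fourteen and thirteen curves to force a second case--(i) curve and then disjointness), whereas you extract the single clean inequality $|\mathcal S_{\mathrm{ii}}|\le K_{S}R=12$ in one stroke. This immediately gives $|\mathcal S_{\mathrm i}|\ge 4$, which is sharp (the Burniat example has $\mathcal S_{\mathrm i}=\{e_{4},e_{5},\gamma,\delta\}$). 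Your endgame is also different: rather than a further ramification computation, you use that the intersection graph of the sixteen lines is the Clebsch graph, hence triangle--free, so any three of them contain a disjoint pair. The paper's approach has the virtue of being entirely self-contained and of producing, along the way, explicit residual divisors that feed into the later analysis; your approach is shorter, conceptually cleaner, and makes the numerics (why ``two'' and not ``three'') transparent. One small point: your appeal to Remark~\ref{cremona} for the symmetry really uses that the Weyl group $W(D_{5})$ acts transitively on the sixteen lines, which is a bit more than that remark states; it would be cleaner either to cite the Clebsch graph directly or to verify triangle-freeness by the short case check you sketch (neighbours of $e_{i}$, of a line $l_{ij}$, and of $\delta$).
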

\begin{proof}
Let $R$ be the ramification divisor of the bicanonical morphism $\varphi\colon S\longrightarrow \Sigma \subset \mathbb{P}^{4}$. By Hurwitz formula $K_{S}\equiv \varphi^{*}(K_{\Sigma})+R$, we get $R\equiv  K_{S}+\varphi^{*}(-K_{\Sigma})\equiv3K_{S}$. Because $\varphi^{*}(-K_{\Sigma})\equiv2K_{S}$ since the image $\Sigma$ of $\varphi$ is a del Pezzo surface of degree $4$ in $\mathbb{P}^{4}$ (Note Notations \ref{notdel} and \ref{BDBM1}). 

We assume $\varphi^{*}(e_{i})=2E_{i}$, $\varphi^{*}(e'_{j})=2E'_{j}$, $\varphi^{*}(g_{j})=2G_{j}$, $\varphi^{*}(h_{j})=2H_{j}$ for $i=1,2,3,4,5$ and $j=1,2,3$, and $\varphi^{*}(\gamma)=2\Gamma$. Put \[R_{1}:= R-\left(\sum_{i=1}^{3}(E_{i}+E'_{i}+G_{i}+H_{i})+E_{4}+E_{5}+\Gamma\right).\]
It implies $2R_{1}\equiv \varphi^{*}(-l)$. By the assumption, $\varphi$ is ramified along reduced curves $E_{i},\ E'_{j},\ G_{j},\ H_{j}$ for $i=1,2,3,4,5$ and $j=1,2,3$, and $\Gamma$. So $R_{1}$ is a nonzero effective divisor. But it is a contradiction because $0< (2R_{1})(2K_{S})=\varphi^{*}(-l)\varphi^{*}(-K_{\Sigma})<0$ since $\varphi$ is finite and $K_{S}$ is ample by Proposition \ref{finampirr}.\ Thus by Lemma \ref{twocases} and Remark \ref{cremona} we may consider $\varphi^{*}(e_{5})=E_{5}$, where $E_{5}$ is a reduced smooth rational $(-4)$-curve. 

Again, we assume $\varphi^{*}(e_{i})=2E_{i}$, $\varphi^{*}(e'_{j})=2E'_{j}$, $\varphi^{*}(g_{j})=2G_{j}$, $\varphi^{*}(h_{j})=2H_{j}$ for $i=1,2,3,4$ and $j=1,2,3$, and $\varphi^{*}(\gamma)=2\Gamma$. Put \[R_{2}:=R-\left(\sum_{i=1}^{3}(E_{i}+E'_{i}+G_{i}+H_{i})+E_{4}+\Gamma\right).\]
It induces $2R_{2}\equiv \varphi^{*}(-l+e_{5})$. Then the nonzero divisor $R_{2}$ is effective. Because $\varphi$ is ramified along reduced curves $E_{i},\ E'_{j},\ G_{j},\ H_{j}$ for $i=1,2,3,4$ and $j=1,2,3$, and $\Gamma$ from the assumption. It gives a contradiction because $0< (2R_{2})(2K_{S})=\varphi^{*}(-l+e_{5})\varphi^{*}(-K_{\Sigma})<0$ since $\varphi$ is finite and $K_{S}$ is ample by Proposition \ref{finampirr}. By Lemma \ref{twocases} we get an $(-1)$-curve $e$ with an $(-4)$-curve $\varphi^{*}(e)$ among $e_{i}$, $e'_{j},\ g_{j},\ h_{j}$ for $i=1,2,3,4$ and $j=1,2,3$, and $\gamma$.
 
We have two $(-1)$-curves $e$ and $e_{5}$ different from $\delta$ on $\Sigma$ such that $\varphi^{*}(e)$ and $\varphi^{*}(e_{5})$ are $(-4)$-curves on $S$. We verify that $e$ and $e_{5}$ are disjoint. By Remark \ref{cremona} we consider that the $(-1)$-curve $e$ is $\gamma$. It is enough to assume $\varphi^{*}(e_{i})=2E_{i}$, $\varphi^{*}(e'_{j})=2E'_{j}$, $\varphi^{*}(g_{j})=2G_{j}$, $\varphi^{*}(h_{j})=2H_{j}$ for $i=1,2,3,4$ and $j=1,2,3$. Then put 
\[R_{3}:=R-\left(\sum_{i=1}^{3}(E_{i}+E'_{i}+G_{i}+H_{i})+E_{4}\right).\]
We get $2R_{3}\equiv \varphi^{*}(-e_{4})$.\ The nonzero divisor $R_{3}$ is effective.\ Because $\varphi$ is ramified along reduced curves $E_{i},\ E'_{j},\ G_{j},\ H_{j}$ for $i=1,2,3,4$ and $j=1,2,3$ from the assumption. It contradicts because $0< (2R_{3})(2K_{S})=\varphi^{*}(-e_{4})\varphi^{*}(-K_{\Sigma})<0$ since $\varphi$ is finite and $K_{S}$ is ample by Proposition \ref{finampirr}.
\end{proof}

\begin{prop}\label{c1c2c3}
There do not exist three $(-1)$-curves $C_{1},C_{2}$ and $C_{3}$ different from $\delta$ on $\Sigma$ satisfying

$(i)$ $C_{i}\cap C_{j}=\varnothing$ for distinct $i,j\in\{1,2,3\};$

$(ii)$ $\varphi^{*}(C_{i})$ for $i=1,2,3$ are $(-4)$-curves.
\end{prop}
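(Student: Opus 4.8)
The plan is to argue by contradiction. Suppose such curves $C_{1},C_{2},C_{3}$ exist. Since $C_{i}C_{j}=0$ for $i\neq j$, their pull-backs satisfy $\varphi^{*}(C_{i})\varphi^{*}(C_{j})=4C_{i}C_{j}=0$, so $\varphi^{*}(C_{1}),\varphi^{*}(C_{2}),\varphi^{*}(C_{3})$ are three pairwise disjoint $(-4)$-curves. By Lemma \ref{three-4} this family is already maximal: no further $(-4)$-curve on $S$ can be disjoint from all of them. Hence every $(-1)$-curve $C$ on $\Sigma$ disjoint from $C_{1},C_{2},C_{3}$ must fall into case $(ii)$ of Lemma \ref{twocases}, as otherwise $\varphi^{*}(C)$ would be a fourth disjoint $(-4)$-curve. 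Using Remark \ref{cremona} I would normalize the configuration so that $C_{1},C_{2},C_{3}=e_{3},e_{4},e_{5}$ (every disjoint triple avoiding $\delta$ being equivalent to a triple of $\rho$-exceptional curves under the Cremona symmetries of Remark \ref{cremona}). The $(-1)$-curves disjoint from all three are then exactly $e_{1},e_{2}$ and $e'_{3}$, so each of these is of type $(ii)$: $\varphi^{*}(e_{1})=2E_{1},\ \varphi^{*}(e_{2})=2E_{2},\ \varphi^{*}(e'_{3})=2E'_{3}$.

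Next I would run the ramification computation of Proposition \ref{theretwo}. Write $\mathcal{A}$ for the set of $(-1)$-curves of type $(i)$ and $\mathcal{B}$ for those of type $(ii)$. Hurwitz' formula gives $R\equiv 3K_{S}$ and $\varphi^{*}(-K_{\Sigma})\equiv 2K_{S}$; subtracting from $R$ the ramification curve of each member of $\mathcal{B}$ produces an effective divisor $R'$ with
\[2R'\equiv\varphi^{*}\Big(K_{\Sigma}+\textstyle\sum_{C\in\mathcal{A}}C\Big),\qquad R'\cdot K_{S}=\#\mathcal{A}-4 .\]
Since $K_{S}$ is ample (Proposition \ref{finampirr}) this forces $\#\mathcal{A}\geq 4$, with equality exactly when $R'=0$ and $\sum_{C\in\mathcal{A}}C\equiv -K_{\Sigma}$. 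But if $e_{3},e_{4},e_{5}\in\mathcal{A}$ and $\#\mathcal{A}=4$, then $\mathcal{A}=\{e_{3},e_{4},e_{5},X\}$ with $X\equiv -K_{\Sigma}-e_{3}-e_{4}-e_{5}$, whose self-intersection is $-5$, so $X$ cannot be a $(-1)$-curve. Therefore $\#\mathcal{A}\geq 5$ and $R'$ is a \emph{nonzero} effective divisor.

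It remains to contradict $R'\neq 0$, which is the heart of the matter. By construction $R'$ contains neither the reduced $(-4)$-curve $\varphi^{*}(C)$ of any $C\in\mathcal{A}$ nor the ramification curve $E_{C}$ of any $C\in\mathcal{B}$ (each $E_{C}$ enters $R$ with multiplicity exactly one); thus $R'$ is supported over branch components of $\varphi$ that are not among the sixteen $(-1)$-curves. On the other hand $M:=K_{\Sigma}+\sum_{C\in\mathcal{A}}C$ is pseudo-effective — push forward by $\varphi$ and use $\varphi_{*}\varphi^{*}=4$ to see that $4M$ is effective — hence lies in the effective cone of $\Sigma$, which is generated by the $(-1)$-curves; moreover $M\cdot C=\tfrac12\varphi^{*}(M)\cdot\varphi^{*}(C)=R'\cdot\varphi^{*}(C)\geq 0$ for each $C\in\mathcal{A}$, reflecting that every type-$(i)$ curve meets at least two others in $\mathcal{A}$. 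Using this, together with the genus-$3$ pencils $|F_{i}|$ of Proposition \ref{cong3} and the irreducibility of pull-backs from Proposition \ref{finampirr}, I would carry $M$ onto the type-$(ii)$ lines, so that $R'$ becomes linearly equivalent to a nonzero effective combination $\sum_{C\in\mathcal{B}}E_{C}$ of the ramification curves. Finally Lemma \ref{2Deff1}, applied with $d=M$ after checking that $-K_{\Sigma}-M=-2K_{\Sigma}-\sum_{C\in\mathcal{A}}C$ is effective, yields $h^{0}(S,\mathcal{O}_{S}(R'))\leq 1$, so $R'$ must actually \emph{equal} that combination of the $E_{C}$ — contradicting that $R'$ omits every $E_{C}$. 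The main obstacle is precisely this last step: controlling the stray ramification $R'$ and verifying, uniformly over all admissible sets $\mathcal{A}$ (in particular bounding $\#\mathcal{A}$ so that $-K_{\Sigma}-M$ stays effective), that $M$ is represented through the branch lines, so that the single-section bound of Lemma \ref{2Deff1} can be brought to bear. Once $R'=0$ is excluded — equivalently, once the balanced case $\sum_{C\in\mathcal{A}}C\equiv -K_{\Sigma}$, $\#\mathcal{A}=4$ (a four-cycle of $(-1)$-curves, which contains no three pairwise disjoint members) is seen to be the only survivor — the existence of the disjoint triple $C_{1},C_{2},C_{3}$ is contradicted.
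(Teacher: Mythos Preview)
Your approach diverges from the paper's and, as you yourself flag, has a genuine gap at the decisive step. Everything through ``$\#\mathcal{A}\ge 5$, hence $R'\neq 0$ is effective and supported away from the sixteen $(-1)$-curves'' is fine, but after that the argument does not close. The passage ``carry $M$ onto the type-$(ii)$ lines, so that $R'$ becomes linearly equivalent to a nonzero effective combination $\sum_{C\in\mathcal B}E_C$'' is not an argument, and there is no mechanism in the paper's toolkit that produces such a representation. The intended use of Lemma \ref{2Deff1} also breaks down: you need $-K_\Sigma-M=-2K_\Sigma-\sum_{C\in\mathcal A}C$ effective, but $(-2K_\Sigma-\sum_{C\in\mathcal A}C)\cdot(-K_\Sigma)=8-\#\mathcal A$, so effectivity fails once $\#\mathcal A>8$, and you have no upper bound on $\#\mathcal A$. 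Even when the lemma applies, $h^0(S,\mathcal O_S(R'))\le 1$ only says the linear system $|R'|$ is a single point; it does not identify that point with any particular combination of the $E_C$, so the concluding contradiction does not follow.

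The paper bypasses all of this with a short double-cover argument. After normalising to $C_1=e_2,\ C_2=e_4,\ C_3=e_5$ (equivalent to your choice by relabelling), the curve $e'_2$ is disjoint from all three and hence of type $(ii)$ by Lemma \ref{three-4}, so $\varphi^*(e'_2)=2E'_2$. Writing $-K_\Sigma\equiv e'_2+2f_2+e_2-e_4-e_5$ gives
\[
2\bigl(K_S-E'_2-F_2+E_4+E_5\bigr)\equiv E_2+E_4+E_5,
\]
so the three disjoint $(-4)$-curves are \emph{even} in $\mathrm{Pic}(S)$. Take the smooth double cover $\pi\colon Y\to S$ branched over $E_2+E_4+E_5$; the formulas of Subsection \ref{DC} yield $K_Y^2=14$, $\chi(\mathcal O_Y)=2$, and $p_g(Y)=h^0(S,\varphi^*(l)+E'_2)\ge 3$, hence $q(Y)\ge 2$. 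This contradicts Proposition \ref{inequiKq}, which forces $K_Y^2\ge 16(q(Y)-1)$. The whole point is that three disjoint $(-4)$-curves are numerically borderline for Miyaoka's bound (Lemma \ref{three-4}) but already too many for the cover inequality of Proposition \ref{inequiKq}; your ramification bookkeeping, being essentially a first-order count, cannot see this.
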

\begin{proof}
Assume that the proposition is not true.\ We may consider $C_{1}=e_{2},\ C_{2}=e_{4}$ and $C_{3}=e_{5}$ by Remark \ref{cremona}. Then $E_{2}=\varphi^{*}(e_{2})$, $E_{4}=\varphi^{*}(e_{4})$ and $E_{5}=\varphi^{*}(e_{5})$ are reduced smooth rational $(-4)$-curves. And $\varphi^{*}(e'_{2})=2E'_{2}$ with ${E'_{2}}^{2}=-1,\ K_{S}E'_{2}=1$ by Lemmas \ref{twocases} and \ref{three-4}. Then 

\begin{align*}
2K_{S} &\equiv\varphi^{*}\left(3l-\sum_{i=1}^{5}e_{i}\right)  \equiv \varphi^{*}(e'_{2}+2f_{2}+e_{2}-e_{4}-e_{5})\\
            &\equiv 2E'_{2}+2F_{2}+E_{2}-E_{4}-E_{5}.
\end{align*}
We get $2(K_{S}-E'_{2}-F_{2}+E_{4}+E_{5})\equiv E_{2}+E_{4}+E_{5}$. We consider a double cover $\pi\colon Y\longrightarrow S$ branched over $E_{2},\ E_{4}$ and $E_{5}$. By the formula in Subsection \ref{DC} we obtain
\[K_{Y}^{2}=2(2K_{S}-E'_{2}-F_{2}+E_{4}+E_{5})^{2}=14,\]
\[\chi(\mathcal{O}_{Y})=2+\frac{(K_{S}-E'_{2}-F_{2}+E_{4}+E_{5})\cdot(2K_{S}-E'_{2}-F_{2}+E_{4}+E_{5})}{2}=2,\]
\begin{align*}
p_{g}(Y)  &  =h^{0}(S,\mathcal{O}_{S}(2K_{S}-E'_{2}-F_{2}+E_{4}+E_{5}))\\
               &  =h^{0}(S,\mathcal{O}_{S}(\varphi^{*}(-K_{\Sigma}-e'_{2}-f_{2}+e_{4}+e_{5})+E'_{2}))\\
               &  =h^{0}(S,\mathcal{O}_{S}(\varphi^{*}(l)+E'_{2}))\ge3.
\end{align*}
Thus we have $q(Y)\ge 2$, and so $K_{Y}^{2}<16(q(Y)-1)$.\ It is a contradiction by Proposition \ref{inequiKq}.
\end{proof}

\begin{assum}\label{ass}
\em{From Lemma \ref{twocases}, Propositions \ref{theretwo} and \ref{c1c2c3} we may assume that $\varphi^{*}(e_{4})=E_{4}$ and $\varphi^{*}(e_{5})=E_{5}$ by Remark \ref{cremona}, where $E_{4}$ and $E_{5}$ are $(-4)$-curves, $\varphi^{*}(e_{i})=2E_{i},\ \varphi^{*}(e'_{i})=2E'_{i},\ \varphi^{*}(g_{j})=2G_{j}$ and $\varphi^{*}(h_{j})=2H_{j}$ for $i=1,2,3$ and $j=1,2$.} 
\end{assum}
\begin{nota}\label{etE}
{\rm{$2(E_{j}+E'_{k})$ and $2(E'_{j}+E_{k})$ are two double fibers of $u_{i}\colon S\longrightarrow \mathbb{P}^{1}$ induced by $|F_{i}|$ where $\{i,j,k\}=\{1,2,3\}$. Set $\eta_{i}\equiv (E_{j}+E'_{k})-(E'_{j}+E_{k})$ where $\{i,j,k\}=\{1,2,3\}$, and set $\eta\equiv K_{S}-\sum_{i=1}^{3}(E_{i}+E'_{i})$. Then $2\eta\equiv -E_{4}-E_{5}$, and by Lemma 8.3, Chap. III in \cite{CCS} $\eta_{i}\notequiv0$ for $i=1,2,3$. It implies that $\eta_{i},\ i=1,2,3$ are torsions of order $2$.
}}
\end{nota}

\begin{prop}[Note Proposition 5.9 ($resp.\ 4.13$) in \cite{CCMSS0} ($resp.\ $\cite{CCS05BM})]\label{FijKi}
For a general curve $F_{i}\in |F_{i}|,\ i=1,2,3,$ \[F_{j}|_{F_{i}}\equiv K_{F_{i}}\ \textrm{if}\ i\neq j.\]
\end{prop}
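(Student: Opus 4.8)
The plan is to reduce the statement to the vanishing of a single $2$-torsion line bundle on $F_i$, and then to pin that torsion down from the geometry of the fibration $u_i$.

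First I would line up the two divisors. Since $F_i$ is a fibre of $u_i\colon S\to\mathbb{P}^1$ we have $\mathcal{O}_S(F_i)|_{F_i}\cong\mathcal{O}_{F_i}$, so adjunction gives $K_{F_i}\equiv(K_S+F_i)|_{F_i}\equiv K_S|_{F_i}$. Using $2K_S\equiv\varphi^*(-K_\Sigma)$ together with $(-K_\Sigma)f_i=2$ and $f_jf_i=1$ one finds $K_SF_i=F_jF_i=4$, so by Proposition \ref{cong3} both $K_{F_i}$ and $F_j|_{F_i}$ are line bundles of degree $4$ on the genus-$3$ curve $F_i$. By Riemann--Roch on $F_i$ a degree-$4$ line bundle $L$ satisfies $h^0(L)\le 3$, with equality if and only if $L\equiv K_{F_i}$; hence the assertion $F_j|_{F_i}\equiv K_{F_i}$ is equivalent to the vanishing of $\tau:=\mathcal{O}_{F_i}(K_{F_i}-F_j|_{F_i})$.

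Next I would extract the formal content and set up the tools. Restricting $2K_S\equiv\varphi^*(-K_\Sigma)$ and $2F_j\equiv\varphi^*(2f_j)$ to $F_i=\varphi^{-1}(f_i)$, and using that on $f_i\cong\mathbb{P}^1$ two line bundles of the same degree coincide, I obtain $2K_{F_i}\equiv 2F_j|_{F_i}=(\varphi|_{F_i})^*\mathcal{O}_{\mathbb{P}^1}(2)$; thus $\tau$ is $2$-torsion and $F_j|_{F_i}=(\varphi|_{F_i})^*\mathcal{O}_{\mathbb{P}^1}(1)$ is a base-point-free $g^1_4$. In parallel I would record the useful fact that $p_g(S)=q(S)=0$ and Riemann--Roch give $h^0(S,K_S+F_i)=3$, while $h^0(S,K_S)=0$ forces the restriction $H^0(S,K_S+F_i)\to H^0(F_i,K_{F_i})$ to be an isomorphism. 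Consequently $|K_{F_i}|$ is cut out on $F_i$ by $|K_S+F_i|$, and $\tau=0$ is precisely the statement that the pencil $(\varphi|_{F_i})^*|\mathcal{O}(1)|$ extends to the full canonical $g^2_4$.

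The hard part is the vanishing of $\tau$, and I expect this to be the main obstacle. The numerical data determine $\tau$ only modulo $\mathrm{Jac}(F_i)[2]$: because $K_S$ is not a pull-back from $\Sigma$, and every $\varphi$-vertical curve restricts to a $2$-torsion class on the general fibre, intersection theory reproduces only $2\tau=0$ and never $\tau=0$. To evaluate $\tau$ itself I would use the two double fibres $2(E_j+E'_k)$ and $2(E'_j+E_k)$ of $u_i$ (Notation \ref{etE}) together with the ramification and branch data of $\varphi$ furnished by Proposition \ref{theretwo} and Assumption \ref{ass}: realising $F_i=\varphi^{-1}(f_i)$ as a degree-$4$ cover of $f_i$ whose ramification is the reduced preimage of the branch curves, one tracks the square root of $(\varphi|_{F_i})^*\mathcal{O}(2)$ singled out by $K_S$ through a reduced half-fibre $E_j+E'_k$ (a curve of arithmetic genus $2$ lying over the branch points), and identifies it with $(\varphi|_{F_i})^*\mathcal{O}(1)$, forcing $\tau=0$. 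This torsion computation is exactly the delicate point, and it is where I would follow Proposition 4.13 of \cite{CCS05BM} and Proposition 5.9 of \cite{CCMSS0}.
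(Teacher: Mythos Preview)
Your reduction to the vanishing of the $2$-torsion class $\tau=(K_S-F_j)|_{F_i}$ is correct and cleanly stated, and the auxiliary computation $h^0(S,K_S+F_i)=3$ is fine. The gap is in the last paragraph, where the actual work lies. Your sketch --- restrict $\varphi$ to $F_i$, look at the degree-$4$ cover $F_i\to f_i$, and ``track the square root through a reduced half-fibre $E_j+E'_k$'' --- is not a proof and, more to the point, is not what the cited propositions do. The half-fibres $E_j+E'_k$ are special fibres of $u_i$, not subcurves of a general $F_i$; there is no evident way to read off the value of $\tau\in\mathrm{Jac}(F_i)[2]$ from them by restriction, and a degeneration argument would need a monodromy analysis you have not supplied. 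As you yourself note, intersection numbers alone only give $2\tau=0$.

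The paper's argument (which \emph{is} the argument of \cite{CCMSS0,CCS05BM}) is global on $S$ and uses irregularity, not fibrewise geometry. One writes $2K_S\equiv F_1+2(2E_1+E'_3+E'_2)-E_4-E_5$, hence
\[
2\bigl(K_S-(2E_1+E'_3+E'_2)+E_4+E_5\bigr)\equiv F_1+E_4+E_5,
\]
and builds the smooth double cover $\pi\colon Y\to S$ branched over $F_1+E_4+E_5$ with this datum. A direct computation gives $\chi(\mathcal{O}_Y)=3$ and $p_g(Y)\ge 3$, so $q(Y)\ge 1$. By Proposition~\ref{albenese} the Albanese pencil of $Y$ descends to a pencil $|F|$ on $S$ whose general member has disconnected $\pi$-preimage; since $\pi$ is branched over $F_1$ this forces $F F_1=0$, hence $|F|=|F_1|$. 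Then $\pi^*(F_1)$ is the \'etale double cover of $F_1$ determined by the restriction of the building line bundle, and its disconnectedness says exactly that
\[
\bigl(K_S-(2E_1+E'_3+E'_2)+E_4+E_5\bigr)\big|_{F_1}\equiv (K_S-F_2)\big|_{F_1}
\]
is trivial, i.e.\ $\tau=0$. This Albanese/irregularity step is the missing idea in your proposal.
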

\begin{proof}
We verify that $F_{2}|_{F_{1}}\equiv K_{F_{1}}$. Since $2K_{S}\equiv F_{1}+2(2E_{1}+E'_{3}+E'_{2})-E_{4}-E_{5}$, we get \[2(K_{S}-(2E_{1}+E'_{3}+E'_{2})+E_{4}+E_{5})\equiv F_{1}+E_{4}+E_{5}.\]
It gives a double cover $\pi\colon Y\longrightarrow S$ branched over $F_{1},\ E_{4}$ and $E_{5}$. We have 
\begin{align*}
\chi(\mathcal{O}_{Y})=3
\end{align*}
and
\begin{align*}
p_{g}(Y) & =h^{0}(S,\mathcal{O}_{S}(F_{1}+2E_{1}+E'_{3}+E'_{2}))\\
              & =h^{0}(S, \mathcal{O}_{S}(\varphi^{*}(f_{1}+e_{1})+E'_{3}+E'_{2}))\\
              & =h^{0}(S, \mathcal{O}_{S}(\varphi^{*}(l)+E'_{3}+E'_{2}))\ge 3,
\end{align*}
thus $q(Y)\ge 1$. By Proposition \ref{albenese} the Albanese pencil of $Y$ is the pull-back of a pencil $|F|$ of $S$ such that $\pi^{*}(F)$ is disconnected for a general element $F$ in $|F|$. Thus $FF_{1}=0$ because $\pi$ is branched over $F_{1}$. It means $|F|=|F_{1}|$. For a general element $F_{1}\in |F_{1}|$, $\pi^{*}(F_{1})$ is an unramified double cover of $F_{1}$ given by the relation $2(K_{S}-(2E_{1}+E'_{3}+E'_{2})+E_{4}+E_{5})|_{F_{1}}$. Since $\pi^{*}(F_{1})$ is disconnected, we get
\begin{align*}
(K_{S}-(2E_{1}+E'_{3}+E'_{2})+E_{4}+E_{5})|_{F_{1}} & \equiv (K_{S}-2E_{1})|_{F_{1}}\\ 
                                                                                      & \equiv (K_{S}-2E_{1}-2E'_{3})|_{F_{1}}\\
                                                                                      & \equiv (K_{S}-F_{2})|_{F_{1}}
\end{align*}
is trivial. Thus $F_{2}|_{F_{1}}\equiv K_{F_{1}}$.
\end{proof}

\begin{lemma}\label{invariants} We have:\\
$(i)$ $\chi(\mathcal{O}_{S}(K_{S}+\eta+\eta_{i}))=-1,\ h^{2}(S,\mathcal{O}_{S}(K_{S}+\eta+\eta_{i}))=0;$\\
$(ii)$ $h^{0}(F_{i},\mathcal{O}_{F_{i}}(K_{F_{i}}+\eta|_{F_{i}}))\le 2;$\\
$(iii)$ $h^{1}(S,\mathcal{O}_{S}(\eta-\eta_{i}))=1.$
\end{lemma}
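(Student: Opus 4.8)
The plan is to handle all three parts by Riemann--Roch together with Serre duality, reading the intersection numbers off $2\eta\equiv-E_{4}-E_{5}$. Since $E_{4},E_{5}$ are disjoint $(-4)$-curves with $K_{S}E_{4}=K_{S}E_{5}=2$, one gets $\eta^{2}=-2$ and $\eta K_{S}=-2$, while $\eta_{i}$, being $2$-torsion, is numerically trivial, so $\eta_{i}D=0$ for every divisor $D$. I will also use the two identities $\Theta:=\sum_{m=1}^{3}(E_{m}+E'_{m})\equiv K_{S}-\eta$ and $\Theta+\eta_{i}\equiv F_{i}+E_{i}+E'_{i}$, the second obtained by plugging $F_{i}\equiv2(E_{j}+E'_{k})$ (Notation \ref{etE}) into the definition of $\eta_{i}$; these rewrite the twists of $\eta$ that appear.

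For part $(i)$, Riemann--Roch gives $\chi(\mathcal{O}_{S}(K_{S}+\eta+\eta_{i}))=1+\tfrac12(K_{S}+\eta+\eta_{i})(\eta+\eta_{i})=1+\tfrac12(K_{S}\eta+\eta^{2})=-1$. By Serre duality $h^{2}(S,\mathcal{O}_{S}(K_{S}+\eta+\eta_{i}))=h^{0}(S,\mathcal{O}_{S}(-\eta-\eta_{i}))$, so it remains to see that $-\eta-\eta_{i}$ is not effective. The key observation is $2(-\eta-\eta_{i})\equiv-2\eta\equiv E_{4}+E_{5}$ together with $h^{0}(S,\mathcal{O}_{S}(E_{4}+E_{5}))=1$ (the two disjoint rigid $(-4)$-curves give the only effective divisor in $|E_{4}+E_{5}|$). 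If $-\eta-\eta_{i}\equiv\Delta\ge0$ then $2\Delta\equiv E_{4}+E_{5}$ forces $2\Delta=E_{4}+E_{5}$ as divisors, impossible by parity of coefficients; hence $h^{2}=0$.

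For part $(iii)$, Riemann--Roch gives $\chi(\mathcal{O}_{S}(\eta-\eta_{i}))=1+\tfrac12\eta(\eta-K_{S})=1$, and $h^{0}(S,\mathcal{O}_{S}(\eta-\eta_{i}))=0$ since $(\eta-\eta_{i})K_{S}=\eta K_{S}=-2<0$ while $K_{S}$ is ample. By Serre duality $h^{2}(S,\mathcal{O}_{S}(\eta-\eta_{i}))=h^{0}(S,\mathcal{O}_{S}(K_{S}-\eta+\eta_{i}))=h^{0}(S,\mathcal{O}_{S}(F_{i}+E_{i}+E'_{i}))\ge h^{0}(S,\mathcal{O}_{S}(F_{i}))=2$. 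For the reverse bound I introduce the smooth double cover $\pi\colon Y\to S$ branched on $E_{4}+E_{5}$ with $\pi_{*}\mathcal{O}_{Y}=\mathcal{O}_{S}\oplus\mathcal{O}_{S}(\eta-\eta_{i})$ (legitimate because $2(-\eta+\eta_{i})\equiv E_{4}+E_{5}$ is smooth). The formulas of Subsection \ref{DC} give $K_{Y}^{2}=2(K_{S}-\eta)^{2}=12$, $\chi(\mathcal{O}_{Y})=2$ and $p_{g}(Y)=h^{0}(S,\mathcal{O}_{S}(K_{S}-\eta+\eta_{i}))=h^{0}(S,\mathcal{O}_{S}(F_{i}+E_{i}+E'_{i}))$, so $q(Y)=p_{g}(Y)-1$. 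Proposition \ref{inequiKq} then yields $12=K_{Y}^{2}\ge16(q(Y)-1)$, forcing $q(Y)\le1$, hence $h^{0}(S,\mathcal{O}_{S}(F_{i}+E_{i}+E'_{i}))=2$, and finally $h^{1}(S,\mathcal{O}_{S}(\eta-\eta_{i}))=h^{2}-\chi=2-1=1$.

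For part $(ii)$ I restrict to a general fibre $F_{i}$. As $\eta F_{i}=0$ and $E_{4},E_{5}$ are disjoint from $F_{i}$, the class $\eta|_{F_{i}}$ has degree $0$ with $2\eta|_{F_{i}}\equiv-(E_{4}+E_{5})|_{F_{i}}=0$, so it is $2$-torsion; since $\deg(K_{F_{i}}+\eta|_{F_{i}})=4=\deg K_{F_{i}}$, Riemann--Roch on $F_{i}$ gives $h^{0}(F_{i},\mathcal{O}_{F_{i}}(K_{F_{i}}+\eta|_{F_{i}}))=2+h^{0}(F_{i},\mathcal{O}_{F_{i}}(-\eta|_{F_{i}}))$, which is $\le2$ precisely when $\eta|_{F_{i}}\not\equiv0$. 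Now $\eta|_{F_{i}}=K_{F_{i}}-(E_{i}+E'_{i})|_{F_{i}}$, and $E_{i}|_{F_{i}},E'_{i}|_{F_{i}}$ are the reduced ramification divisors of the degree $4$ map $\varphi|_{F_{i}}\colon F_{i}\to f_{i}\cong\mathbb{P}^{1}$ over the distinct points $e_{i}\cap f_{i}$ and $e'_{i}\cap f_{i}$; each satisfies $2E_{i}|_{F_{i}}\equiv2E'_{i}|_{F_{i}}\equiv K_{F_{i}}$ (using $F_{j}|_{F_{i}}\equiv K_{F_{i}}$ from Proposition \ref{FijKi}), so $\eta|_{F_{i}}\equiv E_{i}|_{F_{i}}-E'_{i}|_{F_{i}}$. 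These two effective degree $2$ divisors lie over different points of $\mathbb{P}^{1}$, hence are unequal; provided the general fibre is non-hyperelliptic they are also not linearly equivalent, giving $\eta|_{F_{i}}\not\equiv0$. The non-hyperellipticity of the general fibre is the one input that is not automatic, and it is the main obstacle: on a hyperelliptic fibre both ramification divisors would coincide with the $g^{1}_{2}$ and $\eta|_{F_{i}}$ could vanish. I expect to exclude this using that $\varphi$ is the bicanonical map, equivalently that the degree $4$ pencil on $F_{i}$ is cut by the sub-pencil of $|K_{F_{i}}|$ coming from $|f_{i}|$.
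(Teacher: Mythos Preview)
Your arguments for $(i)$ and $(iii)$ are correct. Part $(i)$ coincides with the paper's proof. For $(iii)$ your route is genuinely different: the paper bounds $h^{0}(S,K_{S}-\eta+\eta_{i})$ from above by an explicit linear-system argument on $\Sigma$ (computing $h^{0}(2K_{S}+E_{4}+E_{5})=7$, identifying $|2(K_{S}-\eta+\eta_{i})|=\varphi^{*}|3l-e_{1}-e_{2}-e_{3}|$, and then invoking Lemma~\ref{Mm} to force the moving part to be $\varphi^{*}|f_{j}|$), whereas you get the same bound in one stroke from Proposition~\ref{inequiKq} applied to the smooth double cover branched on $E_{4}+E_{5}$. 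Your approach is cleaner and avoids the pull-back bookkeeping; the paper's approach has the small advantage of identifying the moving part explicitly, but that information is not used elsewhere.

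Part $(ii)$, however, has a real gap, and your proposed fix cannot work. You reduce correctly to showing $\eta|_{F_{i}}\equiv E_{i}|_{F_{i}}-E'_{i}|_{F_{i}}\not\equiv 0$, and then hope to deduce this from non-hyperellipticity of the general fibre. But in this very paper the general fibre of $|F_{i}|$ is later shown to be \emph{hyperelliptic} (Step~4 of the proof of Theorem~\ref{mainthm}); so the hypothesis you want to verify is actually false, and any attempt to ``exclude hyperellipticity using that $\varphi$ is bicanonical'' is doomed. Even granting hyperellipticity, one cannot immediately decide whether $E_{i}|_{F_{i}}$ and $E'_{i}|_{F_{i}}$ both lie in the $g^{1}_{2}$, so nothing is gained. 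The paper sidesteps the fibre geometry entirely: from the exact sequence
\[
0\longrightarrow\mathcal{O}_{S}(K_{S}+\eta+\eta_{i})\longrightarrow\mathcal{O}_{S}(K_{S}+\eta+\eta_{i}+F_{i})\longrightarrow\mathcal{O}_{F_{i}}(K_{F_{i}}+\eta|_{F_{i}})\longrightarrow0
\]
and part $(i)$ one gets $h^{0}(F_{i},K_{F_{i}}+\eta|_{F_{i}})\le h^{0}(S,K_{S}+\eta+\eta_{i}+F_{i})+1$; since $K_{S}+\eta+\eta_{i}+F_{i}\equiv 2K_{S}-(E_{i}+E'_{i})$ and $|2K_{S}|$ embeds $E_{i}+E'_{i}$ as two skew lines in $\mathbb{P}^{4}$, one has $h^{0}(S,2K_{S}-(E_{i}+E'_{i}))=1$, giving the bound $2$. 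This is the argument you should substitute for your fibrewise one.
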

\begin{proof}
$(i)$ By Riemann-Roch theorem, $\chi(S,\mathcal{O}_{S}(K_{S}+\eta+\eta_{i}))=-1$ since $2\eta\equiv -E_{4}-E_{5}$. Moreover, $h^{0}(S,\mathcal{O}_{S}(-\eta+\eta_{i}))=0$ because $2(-\eta+\eta_{i})\equiv E_{4}+E_{5}$ and $E_{4},\ E_{5}$ are reduced $(-4)$-curves. It implies $h^{2}(S,\mathcal{O}_{S}(K_{S}+\eta+\eta_{i}))=0$ by Serre duality.

$(ii)$ We may assume $i=1$. By $\eta_{1}|_{F_{1}}\equiv \mathcal{O}_{F_{1}}$ we have an exact sequence
\[0\longrightarrow \mathcal{O}_{S}(K_{S}+\eta+\eta_{1})\longrightarrow \mathcal{O}_{S}(K_{S}+\eta+\eta_{1}+F_{1})\longrightarrow \mathcal{O}_{F_{1}}(K_{F_{1}}+\eta|_{F_{1}})\longrightarrow 0.\]
Then we get 
\begin{align*}
h^{0}(F_{1},\mathcal{O}_{F_{1}}(K_{F_{1}}+\eta|_{F_{1}})) \le &\ h^{0}(S,K_{S}+\eta+\eta_{1}+F_{1})-h^{0}(S,K_{S}+\eta+\eta_{1})\\
                                                                                             & +h^{1}(S,K_{S}+\eta+\eta_{1})\\
                                                                                            = &\ h^{0}(S,K_{S}+\eta+\eta_{1}+F_{1})-\chi(\mathcal{O}_{S}(K_{S}+\eta+\eta_{1}))\\
                                                                                             & +h^{2}(S,K_{S}+\eta+\eta_{1})\\
                                                                                            = &\ h^{0}(S,\mathcal{O}_{S}(K_{S}+\eta+\eta_{1}+F_{1}))+1. 
\end{align*}
Note $K_{S}+\eta+\eta_{1}+F_{1}\equiv 2K_{S}-(E_{1}+E'_{1})$. Since the linear system $|2K_{S}|$ embeds $E_{1}+E'_{1}$ as a pair of skew lines in $\mathbb{P}^{4}$, we have $h^{0}(S,\mathcal{O}_{S}(2K_{S}-(E_{1}+E'_{1})))=1$. Hence $h^{0}(F_{1},\mathcal{O}_{F_{1}}(K_{F_{1}}+\eta))\le 2$.

$(iii)$ We have $2(\eta-\eta_{i})\equiv -E_{4}-E_{5}$. It implies $h^{0}(S,\mathcal{O}_{S}(\eta-\eta_{i}))=0$. Thus $-h^{1}(S,\mathcal{O}_{S}(\eta-\eta_{i}))+h^{2}(S,\mathcal{O}_{S}(\eta-\eta_{i}))=1$ by Riemann-Roch theorem. We show $h^{0}(S,\mathcal{O}_{S}(K_{S}-\eta+\eta_{1}))=2$ by Serre duality. Indeed, since $E_{4},\ E_{5}$ are rational $(-4)$-curves and $(2K_{S}+E_{4}+E_{5})(E_{4}+E_{5})=0$, we obtain an exact sequence
\[ 0\longrightarrow \mathcal{O}_{S}(2K_{S})  \longrightarrow  \mathcal{O}_{S}(2K_{S}+E_{4}+E_{5})\longrightarrow  \mathcal{O}_{E_{4}\cup E_{5}}  \longrightarrow 0.\]
The canonical divisor $K_{S}$ is ample in Proposition \ref{finampirr}. It follows $h^{0}(S,\mathcal{O}_{S}(2K_{S}))=5$ and $h^{1}(S,\mathcal{O}_{S}(2K_{S}))=0$ by Kodaira vanishing theorem and Riemann-Roch theorem. Thus the long cohomology sequence induces $h^{0}(S,\mathcal{O}_{S}(2K_{S}+E_{4}+E_{5}))=7$. Moreover, since $h^{0}(\Sigma,\mathcal{O}_{\Sigma}(3l-e_{1}-e_{2}-e_{3}))=7$ and $2(K_{S}-\eta+\eta_{1})\equiv 2K_{S}+E_{4}+E_{5}\equiv  \varphi^{*}(3l-e_{1}-e_{2}-e_{3})$, we get $|2(K_{S}-\eta+\eta_{1})|=\varphi^{*}(|3l-e_{1}-e_{2}-e_{3}|)$. Also, $h^{0}(S,\mathcal{O}_{S}(K_{S}-\eta+\eta_{1}))=h^{0}(S,\mathcal{O}_{S}(F_{1}+E'_{1}+E_{1}))\ge2$ because $K_{S}-\eta+\eta_{1}\equiv F_{1}+E'_{1}+E_{1}$.\ We consider $|K_{S}-\eta+\eta_{1}|=|M|+F$ where $|M|$ is the moving part and $F$ is the fixed part.\ By Lemma \ref{Mm} there is a divisor $m$ on $\Sigma$ such that $|M|=\varphi^{*}(|m|)$. Then $3l-e_{1}-e_{2}-e_{3}-2m$ is effective by arguing as in the proof of Lemma \ref{2Deff1}. So $m\equiv f_{i}$ for some $i\in\{1, 2, 3\}$. Hence $h^{0}(S,\mathcal{O}_{S}(K_{S}-\eta+\eta_{1}))=h^{0}(S,\mathcal{O}_{S}(M))=h^{0}(\Sigma, \mathcal{O}_{\Sigma}(f_{i}))=2$. 
\end{proof}

\begin{cor}[Corollary 4.15 in \cite{CCS05BM}]\label{etaij}
For a general curve $F_{i}\in |F_{i}|,\ i=1,2,3$ we have 
\[(-\eta+\eta_{j})|_{F_{i}}\equiv \mathcal{O}_{F_{i}}\ \textrm{if}\ i\neq j;\]
\[ \eta_{i}|_{F_{i}}\equiv \mathcal{O}_{F_{i}};\ (-\eta+\eta_{i})|_{F_{i}}\notequiv\mathcal{O}_{F_{i}}.\]
\end{cor}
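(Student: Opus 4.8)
\noindent The plan is to deduce all three relations by restricting explicit divisor classes to a general fiber $F_i$ and combining three elementary facts: adjunction gives $K_S|_{F_i}\equiv K_{F_i}$ (since $F_i|_{F_i}\equiv\mathcal{O}_{F_i}$, the fiber being a pull-back from $\mathbb{P}^1$ under $u_i$); any effective divisor whose support is disjoint from the general fiber $F_i$ restricts to $\mathcal{O}_{F_i}$ (its defining section gives a nowhere-vanishing section on $F_i$); and Proposition \ref{FijKi} gives $F_j|_{F_i}\equiv K_{F_i}$ for $i\neq j$. The computational backbone is the identity $K_S-\eta+\eta_m\equiv F_m+E_m+E'_m$ for each $m\in\{1,2,3\}$, which I would verify directly from Notation \ref{etE} together with $F_m\equiv 2(E_a+E'_b)$ (a double fiber of $u_m$), exactly as the special case $m=1$ already appears in the proof of Lemma \ref{invariants}(iii).

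\noindent For $\eta_i|_{F_i}\equiv\mathcal{O}_{F_i}$, I would note that $\eta_i\equiv(E_j+E'_k)-(E'_j+E_k)$ is a difference of the two reduced half-fibers of $u_i$; each is a component of a special fiber of $u_i$, hence disjoint from the general $F_i$, so both summands restrict trivially. For the first relation with $i\neq j$, I would rewrite the backbone identity as $-\eta+\eta_j\equiv F_j+E_j+E'_j-K_S$, restrict to $F_i$, and cancel $F_j|_{F_i}$ against $K_S|_{F_i}$ using Proposition \ref{FijKi} and adjunction. The remaining term $(E_j+E'_j)|_{F_i}$ is trivial because $E_j$ and $E'_j$ are components of the double fibers $2(E_j+E'_k)$ and $2(E'_j+E_k)$ of $u_i$, hence again disjoint from the general $F_i$.

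\noindent The only substantive point is the nonvanishing $(-\eta+\eta_i)|_{F_i}\notequiv\mathcal{O}_{F_i}$. Here the backbone identity with $m=i$ gives $-\eta+\eta_i\equiv F_i+E_i+E'_i-K_S$, but now $F_i|_{F_i}\equiv\mathcal{O}_{F_i}$, so the cancellation that drove the first relation no longer occurs and $E_i,E'_i$ persist as genuine $2$-sections of $u_i$. Combining with $\eta_i|_{F_i}\equiv\mathcal{O}_{F_i}$ just proved, I would reduce the claim to showing $\eta|_{F_i}\notequiv\mathcal{O}_{F_i}$. The hard part, and the reason this corollary is placed after Lemma \ref{invariants}, is that numerical data cannot by themselves distinguish the trivial class from a nontrivial degree-zero class; the decisive input is Lemma \ref{invariants}(ii). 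Indeed, if $\eta|_{F_i}\equiv\mathcal{O}_{F_i}$, then $h^0(F_i,\mathcal{O}_{F_i}(K_{F_i}+\eta|_{F_i}))=h^0(F_i,\mathcal{O}_{F_i}(K_{F_i}))=p_g(F_i)=3$, since $F_i$ has genus $3$ by Proposition \ref{cong3}; this contradicts the bound $\le 2$ of Lemma \ref{invariants}(ii), which completes the proof.
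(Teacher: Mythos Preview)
Your proof is correct and follows essentially the same approach as the paper: both use Proposition~\ref{FijKi} together with the disjointness of $E_j,E'_j$ (for $j\neq i$) from a general $F_i$ to establish the first two relations, and both deduce the nonvanishing by reducing to $\eta|_{F_i}\not\equiv\mathcal{O}_{F_i}$ and invoking Lemma~\ref{invariants}(ii) with $p_g(F_i)=3$. The only cosmetic difference is that the paper computes $\eta|_{F_1}\equiv(E_1-E'_1)|_{F_1}$ directly and matches it against $\eta_j|_{F_1}$, whereas you package the same computation through the identity $K_S-\eta+\eta_m\equiv F_m+E_m+E'_m$; this is a harmless reorganization of the same ingredients.
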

\begin{proof}
 By Lemma \ref{FijKi} 
 \begin{align*}
 \eta|_{F_{1}}\equiv (K_{S}-(E_{1}+E'_{1}))|_{F_{1}} & \equiv K_{F_{1}}-(E_{1}+E'_{1})|_{F_{1}}\equiv (F_{2}-(E_{1}+E'_{1}))|_{F_{1}}\\
                                                                                 & \equiv (2(E_{1}+E'_{3})-(E_{1}+E'_{1}))|_{F_{1}}\equiv (E_{1}-E'_{1})|_{F_{1}}.
 \end{align*}
 Since $\eta_{2}|_{F_{1}}\equiv \eta_{3}|_{F_{1}}\equiv(E_{1}-E'_{1})|_{F_{1}}$ we get $(-\eta+\eta_{j})|_{F_{i}}\equiv\mathcal{O}_{F_{i}}$ for $i\neq j$. The definitions of $\eta_{i}$ and $F_{i}$ imply $\eta_{i}|_{F_{i}}\equiv \mathcal{O}_{F_{i}}$. Moreover, if we assume $\eta|_{F_{i}}\equiv \mathcal{O}_{F_{i}}$ then $h^{0}(F_{i},\mathcal{O}_{F_{i}}(K_{F_{i}}+\eta|_{F_{i}}))=h^{0}(F_{i},\mathcal{O}_{F_{i}}(K_{F_{i}}))=3$ because the curve $F_{i}$ has genus $3$ by Proposition \ref{cong3}.\ It induces a contradiction by Lemma \ref{invariants} $(ii)$.
\end{proof}

\section{Proof of Theorem \ref{mainthm}}\label{proofmainthm}

We provide the characterization of Burniat surfaces with $K^{2}=4$ and of non nodal type.\ We use the notations in Notations \ref{BDBM1} and \ref{etE}, and we work with Assumption \ref{ass}. We follow the approaches in \cite{CCMSS0, CCS05BM}.
\begin{lemma}[Note Lemma 5.1 in \cite{CCS05BM}]\label{threefibers}
Let $u\colon S\longrightarrow \mathbb{P}^{1}$ be a fibration such that $E_{4}$ and $E_{5}$ are contained in fibers.\ Then $u$ is induced by one of the pencils $|F_{i}|,\ i=1,2,3$.
\end{lemma}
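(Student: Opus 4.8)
The plan is to transport the numerical class of a general fibre of $u$ back to the del Pezzo surface $\Sigma$ and thereby reduce the whole statement to an elementary inequality there. Recall from the proof of Proposition \ref{finampirr} that $\varphi^{*}\colon H^{2}(\Sigma,\QQ)\longrightarrow H^{2}(S,\QQ)$ is an isomorphism multiplying the intersection form by $4$. Hence, writing $F$ for a general fibre of $u$, I can express $F\equiv\varphi^{*}(D)$ for a unique $\QQ$-divisor $D\equiv al-\sum_{i=1}^{5}b_{i}e_{i}$ on $\Sigma$, and the task becomes to pin down the class $D$.

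The first step is to read off the numerical constraints on $D$. Since $F$ is a fibre, $F^{2}=0$, so $4D^{2}=0$ and $a^{2}=\sum_{i=1}^{5}b_{i}^{2}$. Because $E_{4}$ and $E_{5}$ lie in fibres of $u$, a general fibre is disjoint from them, whence $F\cdot E_{4}=F\cdot E_{5}=0$; as $E_{4}=\varphi^{*}(e_{4})$ and $E_{5}=\varphi^{*}(e_{5})$ by Assumption \ref{ass}, this forces $b_{4}=b_{5}=0$ and hence $a^{2}=b_{1}^{2}+b_{2}^{2}+b_{3}^{2}$. The decisive extra input is positivity: a general fibre $F$ is nef, and since $\varphi^{*}$ carries effective curves on $\Sigma$ to effective divisors on $S$, for every irreducible curve $C\subset\Sigma$ we have $4\,D\cdot C=F\cdot\varphi^{*}(C)\ge0$, so that $D$ is nef on $\Sigma$. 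Testing nefness against the $(-1)$-curves $e_{i}$ gives $b_{i}=D\cdot e_{i}\ge0$, and against the $(-1)$-curves $e'_{i}\equiv l-e_{j}-e_{k}$ (with $\{i,j,k\}=\{1,2,3\}$) gives $a\ge b_{j}+b_{k}$ for each pair.

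With these in hand the conclusion is forced by a short computation. Ordering $b_{1}\ge b_{2}\ge b_{3}\ge0$, the inequality $a\ge b_{1}+b_{2}$ combined with $a^{2}=b_{1}^{2}+b_{2}^{2}+b_{3}^{2}$ yields $b_{3}^{2}\ge2b_{1}b_{2}\ge2b_{3}^{2}$, hence $b_{3}=0$, then $b_{1}b_{2}=0$, so $b_{2}=0$ and $a=b_{1}$. Thus $D\equiv a\,f_{i}$ for some $i\in\{1,2,3\}$, with $a>0$ since $F\not\equiv0$, and therefore $F\equiv a\,F_{i}$. Because $F$ and $F_{i}$ are nef of self-intersection $0$ and proportional, $u$ and the genus $3$ fibration of Proposition \ref{cong3} have exactly the same vertical curves, so $u$ is induced by $|F_{i}|$.

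I expect the only genuine subtleties to be the positivity transfer—verifying carefully that $\varphi^{*}(C)$ is actually effective so that nefness of $F$ descends to nefness of $D$—and the concluding step that a fibration over $\mathbb{P}^{1}$ is determined by the numerical proportionality class of its general fibre (here one may equally invoke the Hodge index theorem to go from $F\cdot F_{i}=0$, $F^{2}=F_{i}^{2}=0$ directly to proportionality). The arithmetic itself is elementary; the real content is the observation that being a fibre with $E_{4}$ and $E_{5}$ vertical forces $D$ to be a nef class on $\Sigma$ of self-intersection zero with $b_{4}=b_{5}=0$, and that on a degree-$4$ del Pezzo surface the only such classes are the positive multiples of $f_{1},f_{2},f_{3}$.
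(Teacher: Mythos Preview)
Your argument is correct and is precisely the standard approach that the paper defers to by citing Lemma 5.7 of \cite{CCMSS0}: transport the numerical class of a general fibre through the isomorphism $\varphi^{*}\colon H^{2}(\Sigma,\QQ)\to H^{2}(S,\QQ)$, use nefness of $F$ to deduce nefness of $D$ on $\Sigma$, and then determine all nef classes of square zero on the del Pezzo with $b_{4}=b_{5}=0$. The arithmetic and the concluding step (that $F\cdot F_{i}=0$ forces the two fibrations to share all vertical curves, hence to coincide up to an automorphism of $\PP^{1}$) are both fine; your caveats about effectivity of $\varphi^{*}(C)$ and about recovering the fibration from the numerical class are exactly the points one needs to check, and both hold here since $\varphi$ is a finite morphism and fibres are connected.
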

\begin{proof}
We argue as in the proof of Lemma 5.7 in \cite{CCMSS0}.
\end{proof}

\begin{rmk}
\em{In Lemma \ref{threefibers} $E_{4}$ and $E_{5}$ are not contained in the same fiber of $u$ because $u$ is induced by one of $|F_{i}|,\ i=1,2,3$.}
\end{rmk}

\textit{Proof of Theorem \ref{mainthm}.}
Let $\pi_{i}\colon Y_{i}\longrightarrow S$ be the double cover branched over $E_{4}$ and $E_{5}$ given by the relation $2(-\eta+\eta_{i})\equiv E_{4}+E_{5}$. By Corollary \ref{etaij} $\eta_{i}\notequiv \eta_{j}$ for $i\neq j$. So $\pi_{i}$ is different from $\pi_{j}$. Serre duality and the formula for $q(Y)$ in Subsection \ref{DC} imply $q(Y_{i})=h^{1}(S,\mathcal{O}_{S}(\eta-\eta_{i}))=1$ from Lemma \ref{invariants} $(iii)$. Let $\alpha_{i}\colon Y_{i}\longrightarrow C_{i}$ be the Albanese pencil where $C_{i}$ is an elliptic curve. By Proposition \ref{albenese} there exist a fibration $h_{i}\colon S\longrightarrow \mathbb{P}^{1}$ and a double cover $\pi'_{i}\colon C\longrightarrow \mathbb{P}^{1}$ such that $\pi'_{i}\circ \alpha_{i}=h_{i}\circ\pi_{i}$. Since $\pi_{i}^{-1}(E_{4})$ and $\pi_{i}^{-1}(E_{5})$ are rational curves they are contained in fibers of $\alpha_{i}$. So $E_{4}$ and $E_{5}$ are contained in fibers of $h_{i}$. Thus $h_{i}=u_{s_{i}}$ for some $s_{i}\in\{1,2,3\}$ by Lemma \ref{threefibers}. We obtain the following commutative diagram:
\begin{equation*}\label{isqd}
\xymatrix{
  Y_{i}  \ar[r]^{\pi_{i}}  \ar[d]_{\alpha_{i}} & S  \ar[d]^{u_{s_{i}}}\\
  C_{i} \ar[r]^{\pi'_{i}}                                 & \mathbb{P}^{1} }
\end{equation*}
By Corollary \ref{etaij} $(-\eta+\eta_{i})|_{F_{i}}\notequiv \mathcal{O}_{F_{i}}$. It implies that a general curve in $\pi^{*}_{i}(|F_{i}|)$ is connected. Hence $s_{i}\neq i$. 
\medskip

We devide the proof into six steps.
\paragraph{\textbf{Step $1\colon$} \it{The fibration $u_{i}\colon S\longrightarrow \mathbb{P}^{1},\ i=1,2,3$ has exactly two double fibers.}}
\textrm{ }

It is enough to show that $u_{3}\colon S\longrightarrow \mathbb{P}^{1}$ has at most two double fibers because $u_{3}$ already has two different double fibers, $2(E_{1}+E'_{2})$ and $2(E_{2}+E'_{1})$. Since $s_{3}\neq 3$ we may consider $u_{s_{3}}=u_{1}$.\ Assume that $u_{3}$ has one additional double fiber $2M$ aside from $2(E_{1}+E'_{2})$ and $2(E_{2}+E'_{1})$.\ Then $M$ is reduced and irreducible by Proposition \ref{finampirr} because $ME_{3}=1$ and $\varphi(M)$ is irreducible. So $\varphi$ is ramified along $M$ because the curve in the pencil $|f_{3}|$ supported on $\varphi(M)$ is reduced.

Let $R$ be the ramification divisor of the bicanonical morphism $\varphi\colon S\longrightarrow \Sigma \subset \mathbb{P}^{4}$. We have $\varphi^{*}(-K_{\Sigma})\equiv 2K_{S}$ since the image $\Sigma$ of $\varphi$ is a del Pezzo surface of degree $4$ in $\mathbb{P}^{4}$ (See Notations \ref{notdel} and \ref{BDBM1}). It implies $R\equiv K_{S}+\varphi^{*}(-K_{\Sigma})\equiv 3K_{S}$ by Hurwitz formula $K_{S}\equiv \varphi^{*}(K_{\Sigma})+R$. Put $R_{0}:=\sum_{i=1}^{3}(E_{i}+E'_{i})+G_{1}+G_{2}+H_{1}+H_{2}+M$. By Assumption \ref{ass} $\varphi$ is ramified along $E_{i},\ E'_{i},\ G_{j}$ and $H_{j}$ for $i=1,2,3$ and $j=1,2$. It follows $R_{0}\le R$. So we get a nonzero effective divisor $E:=2(R-R_{0})\equiv F_{3}-E_{4}-E_{5}$. However, it induces a contradiction because $0<EK_{S}=(F_{3}-E_{4}-E_{5})K_{S}=0$ since $K_{S}F_{3}=4$ by Proposition \ref{cong3}, $E_{4}$ and $E_{5}$ are $(-4)$-curves and $K_{S}$ is ample by Proposition \ref{finampirr}.

Similarly we get that $u_{1},\ u_{2}$ each has exactly two double fibers.
\medskip
\paragraph{\textbf{Step $2\colon$} \it{$(s_{1}\ s_{2}\ s_{3})$ is a cyclic permutation.}}
\textrm{ }

Since $s_{i}\neq i$ we need $s_{i}\neq s_{j}$ if $i\neq j$.\ We verify $s_{1}\neq s_{2}$. Otherwise, it is $s_{1}=s_{2}=3$, and $\alpha_{1}\colon Y_{1}\longrightarrow C_{1}$ ($resp.\ \alpha_{2}\colon Y_{2}\longrightarrow C_{2}$) arises in the Stein factorization of $u_{3}\circ \pi_{1}$ ($resp.\ u_{3}\circ\pi_{2}$). We have the following commutative diagram:
\begin{equation*}
\xymatrix{
  Y_{1}  \ar[r]^{\pi_{1}}  \ar[d]_{\alpha_{1}} & S  \ar[d]^{u_{3}}   &  \ar[l]_{\pi_{2}}  Y_{2} \ar[d]^{\alpha_{2}}\\
  C_{1} \ar[r]^{\pi'_{1}}                                 & \mathbb{P}^{1}   &  \ar[l]_{\pi'_{2}} C_{2}
  }
\end{equation*}
For $i=1,2$ $Y_{i}$  coincides with the normalization of the fiber product $C_{i}\times_{\mathbb{P}^{1}} S$ since $\pi_{i}$ factors through the natural projection $C_{i}\times_{\mathbb{P}^{1}} S \longrightarrow S$ which is also of degree $2$. Thus $\pi'_{1}$ is different from $\pi'_{2}$.\ We denote $q_{1},\ q_{2},\ q_{3}=u_{3}(E_{4}),\ q_{4}=u_{3}(E_{5})$ as the branch points of $\pi'_{1}$. Then we find  a branch point $q_{5}$ of $\pi'_{2}$ which is not branched over by $\pi'_{1}$. We have the fibers over the points $q_{i},\ i=1,2,5$ of $u_{3}$ are double fibers. It is a contradiction by \textbf{Step $1$}.\medskip

From now on we assume $s_{1}=2,\ s_{2}=3,\ s_{3}=1$, and for each $i\in\{1,2,3\}$ the fibration $u_{i}$ has exactly two double fibers.
\medskip
\paragraph{\textbf{Step $3\colon$} \it{$\varphi^{*}(g_{3})$ and $\varphi^{*}(h_{3})$ are not reduced.}}
\textrm{ }

We have the following commutative diagram:
\begin{equation*}
\xymatrix{
  Y_{2}  \ar[r]^{\pi_{2}}  \ar[d]_{\alpha_{2}} & S  \ar[d]^{u_{3}}\\
  C_{2} \ar[r]^{\pi'_{2}}                                 & \mathbb{P}^{1} }
\end{equation*}
Let $W$ be $C_{2}\times_{\mathbb{P}^{1}} S$, and let $p\colon W\longrightarrow S$ be the natural projection which is a double cover. Assume that $G_{3}:=\varphi^{*}(g_{3})$ ($resp.\ H_{3}:=\varphi^{*}(h_{3})$) is reduced.\ Since $\pi'_{2}\colon C_{2}\longrightarrow \mathbb{P}^{1}$ is branched over the point $u_{3}(G_{3})=u_{3}(E_{4})$ ($resp.\ u_{3}(H_{3})=u_{3}(E_{5})$), the map $p$ is branched over $G_{3}$ ($resp.\ H_{3}$).\ Thus $W$ is normal along $p^{-1}(G_{3})$ ($resp.\ p^{-1}(H_{3})$). The map $\pi_{2}\colon Y_{2}\longrightarrow S$ is also branched over $G_{3}$ ($resp.\ H_{3}$) because $Y_{2}$ is the normalization of $W$. It is a contradiction because the branch locus of $\pi_{2}$ is $E_{4}\cup E_{5}$.
\medskip
\paragraph{\textbf{Step $4\colon$}\textit{A general element $F_{i}\in|F_{i}|$ is hyperelliptic for each $i\in\{1,2,3\}$.}}
\textrm{ }

We verify that a general fiber $F_{2}\in|F_{2}|$ is hyperelliptic.\ Since the pull-back $\pi_{1}^{*}(F_{2})$ ($resp.\ \pi_{1}^{*}(F_{3})$) is disconnected, we may consider $\pi_{1}^{*}(F_{2})=\hat{F_{2}}+{\hat{F_{2}}}'$ ($resp.\ \pi_{1}^{*}(F_{3})=\hat{F_{3}}+{\hat{F_{3}}}'$) where the two components are disjoint.\ Then we get $\hat{F_{2}}\hat{F_{3}}=2$ by $F_{2}F_{3}=4$. Let $p\circ h\colon Y_{1}\longrightarrow C\longrightarrow \mathbb{P}^{1}$ be the Stein factorization of $u_{3}\circ \pi_{1}\colon Y_{1}\longrightarrow \mathbb{P}^{1}$. Since $\hat{F_{3}}$ is a fiber of $h\colon Y_{1}\longrightarrow C$, the restriction map $h|_{\hat{F_{2}}}\colon \hat{F_{2}}\longrightarrow C$ is a $2$-to-$1$ map by $\hat{F_{2}}\hat{F_{3}}=2$. Moreover, $C$ is rational because $h\colon Y_{1}\longrightarrow C$ is not Albanese map and $q(Y_{1})=1$. Thus $\hat{F_{2}}$ is hyperelliptic, and so is $F_{2}$.

\medskip
\paragraph{\textbf{Step $5\colon$}\textit{$\varphi\colon S\longrightarrow \Sigma$ is a Galois cover with the Galois group $G\cong\mathbb{Z}_{2}\times\mathbb{Z}_{2}$.}}
\textrm{ }

For each $i\in\{1,2,3\}$, let $\gamma_{i}$ be the involution on $S$ induced by the involution on the general fiber $F_{i}$. Since $S$ is minimal, the maps $\gamma_{i}$ are regular maps. So the maps $\gamma_{i}$ belong to $G$ by Proposition \ref{FijKi}. Now it suffices $\gamma_{i}\neq \gamma_{j}$ if $i\neq j$. We show $\gamma_{2}\neq \gamma_{3}$. Consider the lifted involution $\hat{\gamma_{2}}\colon Y_{1}\longrightarrow Y_{1}$. The restriction of $\alpha_{1}$ identifies $\hat{F_{3}}/ \hat{\gamma_{2}}$ with $C_{1}$ by the construction in Step $4$. Thus we obtain $p_{g}(\hat{F_{3}}/\hat{\gamma_{2}})=1$, but $\hat{F_{3}}/ \hat{\gamma_{3}}\cong \mathbb{P}^{1}$. It means that $\gamma_{2}\neq \gamma_{3}$.
\medskip

\paragraph{\textbf{Step $6\colon$}\ \textit{$S$ is a Burniat surface.}}
\textrm{ }

Denote by $B$ be the branch divisor of $\varphi$. Then we get
\[-3K_{\Sigma}\equiv B \ge \sum_{i=1}^{3}(e_{i}+e'_{i}+g_{i}+h_{i})\equiv -3K_{\Sigma},\]
thus $B=\sum_{i=1}^{3}(e_{i}+e'_{i}+g_{i}+h_{i})$. Denote $B_{i}$ as the image of the divisorial part of the fixed locus of $\gamma_{i}$. We have $B=B_{1}+B_{2}+B_{3}$. By \textbf{Step $4$} we obtain $B_{i}=e_{i}+e'_{i}+g_{i+1}+h_{i+1}$ for each $i\in\{1,2,3\}$, where $g_{4}$ ($resp.\ h_{4}$) denotes $g_{1}$ ($resp.\  h_{1}$).

 The theorem is proved with all steps. \hfill $\Box$ \par

\medskip

{\em Acknowledgements}. This work was supported by Shanghai Center for Mathematical Sciences. The author is very grateful to the referee for valuable suggestions and comments. 

\bigskip

\begin{small}

\end{small}


\begin{thebibliography}{PPS}                                                                                                %

\bibitem{CCS}
Barth W, Peters C, Van de Ven A. Compact complex surfaces. New York: Springer-Verlag, 1984

\bibitem{BSSBII}
Bauer I, Catanese F. Burniat surfaces. II. Secondary Burniat surfaces form three connected components of the moduli space. Invent Math, 2010, 180: 559--588

\bibitem{BSSBIII}
Bauer I, Catanese F. Burniat surfaces III: deformations of automorphisms and extended Burniat surfaces. Doc Math, 2013, 18: 1089--1136

\bibitem{BSSBEII}
Bauer I, Catanese F. Erratum to: Burniat surfaces II: secondary Burniat surfaces form three connected components of the moduli space. Invent Math, 2014, 197: 237--240

\bibitem{CMSGT}
Bombieri E. Canonical models of surfaces of general type. Inst Hautes \'Etudes Sci Publ Math, 1973, 42: 171--219  

\bibitem{DGCSG0}
Mendes Lopes M. The degree of the generators of the canonical ring of surfaces of general type with $p_{g}=0$. Arch Math (Basel), 1997, 69: 435--440

\bibitem{CCMSS0}
Mendes Lopes M, Pardini R. A connected component of the moduli space of surfaces with $p_{g}=0$. Topology, 2001, 40: 977--991

\bibitem{NS03}
Mendes Lopes M, Pardini R. A new family of surfaces with $p_{g}=0$ and $K^{2}=3$. Ann Sci \'{E}cole Norm Sup IV, 2004, 37: 507--531

\bibitem{ESEN}
Mendes Lopes M, Pardini R. Enriques surfaces with eight nodes. Math Z, 2002, 241: 673--683

\bibitem{NCF9}
Mendes Lopes M, Pardini R. Numerical Campedelli surfaces with fundamental group of order 9. J Eur Math Soc (JEMS), 2008, 10: 457--476

\bibitem{SG06}
Mendes Lopes M, Pardini R. Surfaces of general type with $p_{g}=0,\ K^{2}=6$ and non birational bicanonical map. Math Ann, 2004, 329: 535--552

\bibitem{BS07}
Mendes Lopes M, Pardini R. The bicanonical map of surfaces with $p_{g}=0$ and $K^{2}\ge7$. Bull London Math Soc, 2001, 33: 265--274

\bibitem{BS07II}
Mendes Lopes M, Pardini R. The bicanonical map of surfaces with $p_{g}=0$ and $K^{2}\ge7.$ II. Bull London Math Soc, 2003, 35: 337--343

\bibitem{DBS0}
Mendes Lopes M, Pardini R. The degree of the bicanonical map of a surface with $p_{g}=0$. Proc Amer Math Soc, 2007, 135: 1279--1282

\bibitem{CDPG80}
Pardini R. The classification of double planes of general type with $K^{2}=8$ and $p_{g}=0$. J Algebra, 2003, 259: 95--118

\bibitem{mnqs}
Miyaoka Y. The maximal number of quotient singularities on surfaces with given numerical invariants.
Math Ann, 1984, 268: 159--171

\bibitem{ORS}
Nagata M. On rational surfaces. I. Irreducible curves of arithmetic genus 0 or 1. Mem Coll Sci Univ Kyoto Ser A Math, 1960, 32: 351--370

\bibitem{ACAV}
Pardini R. Abelian covers of algebraic varieties. J Reine Angew Math, 1991, 417: 191--213 

\bibitem{VBR2L}
Reider I. Vector bundles of rank $2$ and linear systems on algebraic surfaces. Ann of Math (2), 1988, 127: 309--316


\bibitem{FABSTG}
Xiao G. Finitude de l'application bicanonique des surfaces de type g\'en\'eral. Bull Soc Math France, 1985, 113: 23--51

\bibitem{CCS05BM}
Zhang L. Characterization of a class of surfaces with $p_{g}=0$ and $K^{2}=5$ by their bicanonical maps. Manuscripta Math, 2011, 135: 165--181





\end{thebibliography}
\end{document}